\newtheorem{theorem}{\bf Theorem}[section]
\newtheorem{corollary}[theorem]{\bf Corollary}
\newtheorem{proposition}[theorem]{\bf Proposition}
\newcommand{\qed}{\hfill $\square$ \bigskip}
\newcommand{\ggr}{\gamma_{gr}}
\begin{document}

\title{Dominating sequences under atomic changes with applications in Sierpi\'{n}ski and interval graphs}

\author{
Bo\v{s}tjan Bre\v{s}ar$^{a,b}$
\and
Tanja Gologranc$^{a}$
\and
Tim Kos$^{b}$
}

\date{}

\maketitle

\begin{center}
$^a$ Faculty of Natural Sciences and Mathematics, University of Maribor, Slovenia

$^b$ Institute of Mathematics, Physics and Mechanics, Ljubljana, Slovenia

\end{center}

\begin{abstract}
A sequence $S=(v_1,\ldots,v_k)$ of distinct vertices of a graph $G$ is called a legal sequence if $N[v_i] \setminus \cup_{j=1}^{i-1}N[v_j]\not=\emptyset$ for any $i$. 
The maximum length of a legal (dominating) sequence in $G$ is called the Grundy domination number
$\gamma_{gr}(G)$ of a graph $G$. It is known that the problem of determining the Grundy domination number is NP-complete in general, while efficient algorithm exist for trees and some other classes of graphs \cite{bgmrr-2014}. In this paper we find an efficient algorithm for the Grundy domination number of an interval graph. We also show the exact value of the Grundy domination number of an arbitrary Sierpi\'{n}ski graph $S_p^n$, and present algorithms to construct the corresponding sequence. These results are obtained by using the       main result of the paper, which are sharp bounds for the Grundy domination number of a vertex- and edge-removed graph. That is, given a graph $G$,  $e\in E(G)$, and $u\in V(G)$, we prove that $\ggr(G)-1\le \ggr(G-e) \le \ggr(G)+1$ and $\ggr(G)-2\le \ggr(G-u) \le \ggr(G)$. For each of the bounds there exist graphs, in which all three possibilities occur for different edges, respectively vertices. 
\end{abstract}

\noindent {\bf Key words:} Grundy domination number; Sierpi\'nski graphs; interval graphs.

\medskip\noindent
{\bf AMS Subj.\ Class:} 05C69

\section{Introduction}

Domination is one of the oldest and most studied topics in graph theory,
and is known for many, also real-world applications. Domination theory
was comprehensively surveyed in two monographs from almost 20 years ago~\cite{hhs-1998,hhs2-1998}, while a recent monograph~\cite{hy-2013} focuses
only on the total domination, one of the most basic concepts in the theory.
Many other variations of the classical domination number of a graph have been
introduced in years. A very recent and natural one is the so-called 
Grundy domination number, which somehow describes the worst 
scenario that can happen when a dominating set is built. More precisely, 
the Grundy domination number of a graph is the maximum length of a sequence of its vertices,
such that each time a vertex is added it dominates some
vertex that was not dominated by previous vertices in the sequence~\cite{bgmrr-2014}.
(A total version of Grundy domination was introduced in~\cite{bhr-2016}.)

One of the focuses in this paper will be on two rather famous classes of graphs,
namely the interval graphs and the Sierpi\'{n}ski graphs. 
The interval graphs, i.e., the intersection graphs of intervals in the real line,
have been introduced by Hajos about 60 years ago; they have many applications
in large diversity of areas such as archeology, artificial intelligence, economics and planning, cf.~\cite{bls-99,mcmc-99}, and probably most intensively in mathematical biology, 
see e.g.~\cite{acfgm-07,cr-2006,wg-1986}. On the other hand, Sierpi\'{n}ski graphs were 
also introduced in relation with various problems, such as Tower of Hanoi game, physics, interconnection networks, and topology; see more details in a very recent extensive survey~\cite{kmz-16+}. The main feature of Sierpi\'{n}ski graphs is their fractal-like nature, and can be considered as a basic discrete version of fractals. 

While domination and total domination number are computationally hard problems in general graphs~\cite{GJ}, they can be efficiently determined in several classes of graphs, in particular, in the interval graphs and the Sierpi\'nski graphs. An efficient algorithm for computing the domination number (and some related invariants) in strongly chordal graphs, which as a subclass contain the interval graphs, was first designed by Farber in the 1980s~\cite{fa-1984}; Keil soon followed with a linear time algorithm for determining the total domination number of interval graphs~\cite{ke-1986}. Even more can be said in the case of Sierpi\'nski graphs, since exact formulas for the domination numbers of these graphs were established in~\cite{kmp-2003}, and more recently also the exact total domination numbers of arbitrary Sierpi\'nski graphs were proven~\cite{gkmmp-13}.

A motivation for studying dominating sequences was a domination game as introduced in~\cite{brklra-2010}. Two players alternate turns in this game, one player wants to build a dominating set as quickly as possible, while the other (called Staller) wants to delay the process. By definition of legality of moves in the game it follows that the resulting number of moves when both players play optimally, called the game domination number of a graph, is bounded above by the Grundy domination number (in fact, a legal sequence whose length is the Grundy domination number is obtained when only Staller plays the game). The domination game has been intensively studied by several authors, and a lot of efforts were given to resolve the (still open) $3/5$-conjectures from~\cite{bill-2012}. In~\cite{brdokl-2014} the authors examined the possible changes of the game domination number under vertex- and edge-removal in a graph, and proposed a classification of the graphs with respect to the corresponding behaviour. (For a very recent paper on game domination see~\cite{kks-16+}.)

In this paper we describe the behaviour of the Grundy domination number when an edge or a vertex is removed from a graph, see Section~\ref{s:deliting}. We prove that in any graph $G$ and $u\in V(G)$, the Grundy domination number of $G$ drops by at most 2 when $u$ is removed from $G$. Next, if $e$ is an edge of an arbitrary graph $G$, then the Grundy domination number of $G-e$ is between one less than the Grundy domination number of $G$ and one more than that number. Combining the edge-removal bound and the recursive fractal structure of Sierpi\'nski graphs, we prove in Section~\ref{s:Sierpinski} that the Grundy domination number of the Sierpi\'nski graph $S_p^n$ equals $p^{n-1} + \frac{p(p^{n-1}-1)}{2}$. In addition, we present two efficient algorithms to construct a Grundy dominating sequence of a Sierpi\'nski graph. The first algorithm is optimal, because it uses a recursive formula that builds only the Sierpi\'nski labels of all vertices of the Grundy dominating sequence; the second algorithm is nice in the sense that the vertices are ordered lexicographically with respect to their Sierpi\'nski labels (hence, one could only follow this order and decide whether a given vertex can be put in the sequence or not). Finally, in Section~\ref{s:interval} we make use of the vertex-removal formula (in fact, a version of this formula for the removal of a simplicial vertex) to construct an efficient algorithm for determining a Grundy domination number (resp. sequence) of an arbitrary interval graph. 
In the remainder of this section we present main formal definitions and notation, used throughout the paper.


Let $S=(v_1,\ldots,v_k)$ be a sequence of distinct vertices of a graph $G$. The corresponding set $\{v_1,\ldots,v_k\}$
of vertices from the sequence $S$ will be denoted by $\widehat{S}$. A sequence $S=(v_1,\ldots,v_k)$, where $v_i\in V(G)$,
is called a {\em legal (closed neigborhood) sequence} if, for each $i$
$$N[v_i] \setminus \cup_{j=1}^{i-1}N[v_j]\not=\emptyset.$$
(We also say that $v_i$ is a {\em legal choice}, when the above inequality holds.)
If for a legal sequence $S$, the set $\widehat{S}$ is a dominating set of $G$, then $S$ is called a {\em dominating sequence} of $G$.
Adopting the notation from domination theory, each vertex $u\in N[v_i] \setminus \cup_{j=1}^{i-1}N[v_j]$
is called a {\em private neighbor} of $v_i$ with respect to $\{v_1,\ldots,v_{i}\}$. We will also use a more suggestive
term by saying that $v_i$ {\em footprints} the vertices from $N[v_i] \setminus \cup_{j=1}^{i-1}N[v_j]$, and that $v_i$ is
the {\em footprinter} of any $u\in N[v_i] \setminus \cup_{j=1}^{i-1}N[v_j]$.
For a dominating sequence $S$ any vertex in $V(G)$ has a unique footprinter in $\widehat{S}$.
Thus the function $f_S:V(G)\to \widehat{S}$ that maps each vertex to its footprinter is well defined.
Clearly the length $k$ of a dominating sequence $S=(v_1,\ldots,v_k)$ is bounded from below by the domination number $\gamma(G)$
of a graph $G$. We call the maximum length of a legal dominating sequence in $G$ the {\em Grundy domination number}
of a graph $G$ and denote it by $\gamma_{gr}(G)$. The corresponding sequence is called a {\em Grundy dominating sequence} of $G$ or $\gamma_{gr}$-sequence of $G$.
These concepts were introduced in \cite{bgmrr-2014}.

Let $S_1=(v_1,\ldots , v_n)$ and $S_2=(u_1,\ldots , u_m)$, $n,m \geq 0,$ be two sequences. The {\em concatenation} of $S_1$ and $S_2$ is defined as the sequence $S_1 \oplus S_2=(v_1,\ldots , v_n,u_1,\ldots , u_m).$ 

\section{Grundy domination number of subgraphs, obtained by edge- or vertex-deletion}\label{s:deliting}

\subsection{Edge-deletion}
First consider the subgraphs obtained by the smallest possible atomic change, i.e. deletion of  an edge. Unlike in the standard domination, where edge-deletion can only increase the domination number~\cite[Chapter 5]{hhs-1998}, the following possibilities appear for the Grundy domination number. 

\begin{theorem}
\label{thm:edge-removed}
If $G$ is a graph and $e\in E(G)$, then $$\ggr(G)-1\le \ggr(G-e) \le \ggr(G)+1.$$
Moreover, there exist graphs $G$ such that all values of $\ggr(G-e)$ between 
$\ggr(G)-1$ and $\ggr(G)+1$ are realized for different edges $e\in E(G)$.
\end{theorem}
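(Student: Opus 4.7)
Write $e=xy$. I would prove the two inequalities separately by a ``delete one vertex from the sequence'' trick, and then exhibit a small disjoint union of known graphs that realizes all three values.

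For the lower bound $\ggr(G-e)\ge \ggr(G)-1$, take a Grundy dominating sequence $S=(v_1,\dots,v_k)$ of $G$ and read it in $G-e$. If $v_i\notin\{x,y\}$ then $N_{G-e}[v_i]=N_G[v_i]$, and in general $\bigcup_{j<i}N_{G-e}[v_j]\subseteq\bigcup_{j<i}N_G[v_j]$, so the legality of $v_i$ is preserved in $G-e$. A vertex $v_i\in\{x,y\}$ can become illegal in $G-e$ only when its sole private neighbor at step $i$ in $G$ is the other endpoint of $e$. I would then argue that at most one such ``bad'' $v_i$ can exist: if $v_i=x$ precedes $v_{i'}=y$ in $S$, then $x\in N_G[v_i]\subseteq\bigcup_{j<i'}N_G[v_j]$, hence $x$ cannot be the sole private neighbor of $y$ at step $i'$, so $v_{i'}$ remains legal. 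Removing the bad vertex (if any) from $S$ yields a legal sequence of length $\ge k-1$ in $G-e$; this works because a bad vertex footprints nothing in $G-e$ and so contributes nothing to later coverage.

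For the upper bound $\ggr(G-e)\le \ggr(G)+1$, take a Grundy dominating sequence $T=(u_1,\dots,u_m)$ of $G-e$ and read it in $G$. Now closed neighborhoods can only grow, and $D_l^G\setminus D_l^{G-e}\subseteq\{x,y\}$, where $D_l^H=\bigcup_{j<l}N_H[u_j]$. A case analysis on whether $u_l\in\{x,y\}$ shows that $u_l$ is illegal in $G$ only if its set of private neighbors in $G-e$ at step $l$ is $\{x\}$ or $\{y\}$ and that element actually lies in $D_l^G\setminus D_l^{G-e}$; this forces $u_l$ to be the (unique) footprinter of $x$ or of $y$ in $T$. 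A short subcase argument rules out the simultaneous badness of the two footprinters, since each scenario would require the other endpoint of $e$ to have appeared earlier in $T$, contradicting the uniqueness of footprinting. Deleting the at-most-one bad $u_l$ preserves legality of the remaining positions in $G$, since such a $u_l$ footprints nothing new in $G$ (its sole $G-e$-private neighbor is already covered in $G$ via $e$), giving a legal sequence of length $\ge m-1$ in $G$.

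For sharpness, I would take $G=C_5\cup P_5$. Using additivity of $\ggr$ on disjoint components and the easily verified values $\ggr(C_5)=3$, $\ggr(P_n)=n-1$ for $n\in\{2,3,4,5\}$, and $\ggr(K_1)=1$, one gets $\ggr(G)=7$. Removing an edge of the $C_5$-component yields $P_5\cup P_5$ with $\ggr=8=\ggr(G)+1$; removing a pendant edge of the $P_5$-component yields $C_5\cup K_1\cup P_4$ with $\ggr=7=\ggr(G)$; and removing a central edge of the $P_5$-component yields $C_5\cup P_2\cup P_3$ with $\ggr=6=\ggr(G)-1$. The main obstacle is the upper-bound case analysis, particularly when both $x,y\in\widehat T$ and one of them may be its own footprinter; the decisive structural fact throughout is that every vertex has a unique footprinter in any legal sequence, which is what caps the number of candidate bad vertices at one.
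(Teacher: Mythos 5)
Your proposal is correct and follows essentially the same route as the paper: for each inequality you take a Grundy dominating sequence of one graph, show that at most one vertex (an endpoint of $e$, respectively the sole footprinter of an endpoint) can lose legality, and delete it — exactly the paper's argument, and in fact you are more explicit than the paper about why at most one ``bad'' vertex can occur. The only difference is the sharpness example: you use the disjoint union $C_5\cup P_5$ with additivity of $\ggr$ over components, while the paper uses a connected graph obtained by attaching a path to a cycle; both verifications are routine and valid.
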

\begin{proof}
Let $S$ be a Grundy dominating sequence of $G$, and let $e=uv$ be an edge, deleted from $G$.
Then, if $u,v\notin \widehat{S}$, $S$ is also a legal sequence of $G-e$.
In fact, the only case when $S$ is not a legal sequence of $G-e$ is when $f_S(u)=v$ and
$f_S^{-1}(v)=\{u\}$, or $f_S(v)=u$ and $f_S^{-1}(u)=\{v\}$. Without loss of generality, we may assume that $f_S(u)=v$ and $f_S^{-1}(v)=\{u\}$. Let $S'$ be the sequence obtained by removing the vertex $v$ from $S$. It is clear that $S'$ is a legal sequence in $G-e$ of length $|\widehat{S}|-1$. (If $S'$ is not a dominating sequence, we can append $u$ at the end of it, and obtain a legal dominating sequence of $G-e$.) We infer that $\ggr(G)-1\le \ggr(G-e)$.

For the other inequality consider a Grundy dominating sequence $S=(x_1,\ldots,x_k)$ of $G-e$, where $e=uv$ is deleted from $G$. If  $x_i$ is not in $N_{G-e}[u]\cup N_{G-e}[v]$, then it is clear that $N_G[x_i] \setminus \cup_{j=1}^{i-1}N_G[x_j]\not=\emptyset$, i.e., $x_i$ is a legal choice also in $G$.  Now, suppose that $x_i\in N_{G-e}[u]$. If $f_{S}^{-1}(x_i)\neq \{u\}$, then again $x_i$ is a legal choice also in $G$. But even if $f_{S}^{-1}(x_i)= \{u\}$, and $v\notin\{x_1,\ldots,x_{i-1}\}$, $x_i$ is a legal choice in $G$. Thus the only problem with legality of $x_i$ is when $v=x_j$ for some $j<i$ (note that we are assuming $x_i\in N_{G-e}[u]$ and $f_{S}^{-1}(x_i)= \{u\}$, that is, the only vertex in $G-e$ footprinted by $x_i$ is $u$). Now, let $S'$ be the sequence obtained from $S$ by removing $x_i$, i.e., $S'=(x_1,\ldots,x_{i-1},x_{i+1},\ldots x_k)$. From the above it is clear that $S'$ is a legal sequence in $G$ of length $\ggr(G-e)-1$, hence $\ggr(G)\ge |\widehat{S'}| =\ggr(G-e)-1$.

To see the second part of the theorem, consider the family of graphs $H_{m,n}$, obtained by identifying a vertex of degree 1 of the path $P_m$, $m\ge 3$, with a vertex of the cycle $C_n$, $n\ge 3$. (The meaning of {\em edges of the cycle} and {\em edges of the path} in $H_{m,n}$ should be clear.) It is easy to see that $\ggr(H_{m,n})=m+n-3$. Now, if $e$ is an edge of the cycle in $H_{m,n}$, then $\ggr(G_{m,n}-e)=m+n-2$. If $e$ is the pendant edge of the path in $H_{m,n}$, then $\ggr(G_{m,n}-e)=m+n-3=\ggr(G_{m,n})$. Finally, if $e$ is any other edge (of the path), then $\ggr(G_{m,n}-e)=m+n-4$. 
\qed
\end{proof}

The following immediate consequence of Theorem~\ref{thm:edge-removed} 
will be used later. 

\begin{corollary}
\label{cor:k-edges}
Let $G$ be a graph, and let $G'$ be obtained from $G$, by adding $k$ edges to $G$. 
Then $\ggr(G)-k\le \ggr(G') \le \ggr(G)+k$.
\end{corollary}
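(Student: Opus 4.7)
The plan is to derive this as an immediate iteration of Theorem~\ref{thm:edge-removed}. Enumerate the added edges as $e_1,\ldots,e_k$ and form the chain of graphs $G = G_0, G_1, \ldots, G_k = G'$, where $G_i$ is obtained from $G_{i-1}$ by adding the edge $e_i$. Equivalently, $G_{i-1} = G_i - e_i$, so Theorem~\ref{thm:edge-removed} applied to the edge $e_i \in E(G_i)$ yields
\[
\ggr(G_i) - 1 \;\le\; \ggr(G_{i-1}) \;\le\; \ggr(G_i) + 1,
\]
which is the same as $\ggr(G_{i-1}) - 1 \le \ggr(G_i) \le \ggr(G_{i-1}) + 1$.

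Then I would conclude by a straightforward induction on $k$ (or, equivalently, by summing the telescoping inequalities for $i = 1,\ldots,k$): both endpoints can move by at most $1$ per edge added, so after $k$ steps the total deviation is at most $k$ in either direction, giving $\ggr(G) - k \le \ggr(G') \le \ggr(G) + k$.

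There is essentially no obstacle here; the only thing to be slightly careful about is that Theorem~\ref{thm:edge-removed} must be applied to the graph $G_i$ (which contains $e_i$), not to $G_{i-1}$. Once this is noted the corollary follows in a single line of iteration, which is presumably why the authors call it an immediate consequence.
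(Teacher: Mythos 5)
Your proposal is correct and matches the paper's intent exactly: the corollary is stated there as an immediate consequence of Theorem~\ref{thm:edge-removed}, obtained precisely by iterating the edge-removal bounds along the chain $G=G_0,G_1,\ldots,G_k=G'$ as you do. Your remark about applying the theorem to $G_i$ (the graph containing $e_i$) rather than $G_{i-1}$ is the right point of care, and nothing further is needed.
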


\subsection{Vertex-deletion}

It is easy to see that $\ggr(H)\le \ggr(G)$, if $H$ is an induced subgraph of $G$. Indeed, if $S$ is a Grundy dominating sequence of $H$,
then $S$ is also a sequence in $G$, and it is clearly legal also with respect to $G$. If $S$ is not a dominating sequence in $G$, one can add 
some vertices in a legal way at the end of $S$ to make it a dominating sequence of $G$. Hence $\ggr(G)\ge \ggr(H)$, and so the graph property 
of having the Grundy domination number bounded from above by a constant belongs to hereditary properties.

Let us focus on the action of vertex deletion in a graph $G$. By the observation in the previous paragraph, the Grundy domination number cannot increase when a vertex is removed. The following result specifies how much it can decrease.

\begin{theorem}
\label{thm:vertex-removed}
If $G$ is a graph and $u\in V(G)$, then $$\ggr(G)-2\le \ggr(G-u) \le \ggr(G).$$
\noindent Moreover, there exist graphs $G$ such that all values of $\ggr(G-u)$ between 
$\ggr(G)-2$ and $\ggr(G)$ are realized for different vertices $u\in V(G)$.
\end{theorem}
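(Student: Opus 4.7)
The upper bound is immediate from the observation already made in the paragraph preceding the theorem: since $G-u$ is an induced subgraph of $G$, we have $\ggr(G-u)\le \ggr(G)$. For the lower bound, the plan is to start with a Grundy dominating sequence $S=(v_1,\ldots,v_k)$ of $G$ and, by removing at most two of its vertices, exhibit a legal sequence in $G-u$ of length at least $k-2$ (which can then be extended to a dominating sequence of $G-u$ if needed). The argument splits naturally into two cases depending on whether $u\in\widehat{S}$.

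Case A: $u\notin\widehat{S}$. Let $v_i=f_S(u)$ be the (unique) footprinter of $u$ in $S$. For every $j\ne i$, the footprint of $v_j$ with respect to $S$ cannot contain $u$: if $j<i$ then $u$ is not yet dominated but also not newly dominated at step $j$, and if $j>i$ then $u$ has already been dominated by $v_i$. Hence each $v_j$ with $j\ne i$ has a footprint $w\ne u$ in $G$, and this $w$ certifies that $v_j$ is a legal choice in $G-u$ after $v_1,\ldots,v_{j-1}$ (with $v_i$ optionally removed), because shrinking the graph and the prefix of already-chosen vertices can only enlarge the set of potential private neighbours. The only possible obstruction is $v_i$ itself when its footprint in $G$ is exactly $\{u\}$; in that case one removes $v_i$, ending up with a legal sequence of length $k-1\ge k-2$.

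Case B: $u\in\widehat{S}$, say $u=v_i$. Remove $v_i$ from $S$ and call the resulting sequence $S'$. For every $j>i$, the footprint of $v_j$ in $G$ avoids $u$ (again because $u$ was dominated by $v_i$), and this footprint remains valid in $G-u$ with respect to $S'$. For $j<i$, the only way $v_j$ can fail to be legal in $G-u$ is that its footprint in $G$ is exactly $\{u\}$. The key observation is that \emph{at most one} index $j<i$ has this property: if such a $v_j$ exists, then immediately after step $j$ the vertex $u$ is already dominated, so no later $v_{j'}$ with $j<j'<i$ can have $\{u\}$ as its unique footprint. Removing this (at most one) extra problematic $v_j$ yields a legal sequence in $G-u$ of length at least $k-2$, proving $\ggr(G-u)\ge \ggr(G)-2$.

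For the sharpness part, the plan is to produce a single graph $G$ in which three different vertex removals realise the three possible drops $0$, $1$, $2$. A natural candidate is again a family built from a cycle with an attached path (as in the previous theorem), or a small tailored graph obtained by attaching pendants and short paths to a cycle, where one can choose a pendant vertex (drop $0$), an interior cycle vertex (drop $1$), and a carefully placed cut vertex whose removal kills two independent footprints (drop $2$). The main obstacle in the proof is the bookkeeping in Case B: one must verify that removing \emph{two} vertices always suffices, which hinges on the uniqueness argument above; once this is observed, the rest of the proof is routine verification of legality under restriction.
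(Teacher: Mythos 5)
Your proof of the two bounds is correct and is essentially the paper's argument: the upper bound via the induced-subgraph observation, and the lower bound by deleting from a Grundy dominating sequence $S$ the footprinter $f_S(u)$ (when problematic) and $u$ itself. The paper does this in one stroke (remove $v=f_S(u)$, and $u$ if $u\in\widehat{S}$, then argue every remaining vertex keeps its old footprint, which is necessarily different from $u$), whereas you split into the cases $u\notin\widehat{S}$ and $u\in\widehat{S}$; your uniqueness observation (only one vertex of $S$ can have $\{u\}$ as its entire footprint, namely $f_S(u)$) is exactly the point that makes two deletions suffice, so this part is sound.

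The genuine gap is the second assertion of the theorem, which you do not prove. You only describe a ``plan'' and ``natural candidates'' (a cycle with an attached path, pendants, a cut vertex) without fixing a graph, computing its Grundy domination number, or verifying the three values $\ggr(G)$, $\ggr(G)-1$, $\ggr(G)-2$ for specific vertices; as stated, this is not a proof of realizability. Moreover, the heuristics you lean on are unreliable: a pendant vertex need not give drop $0$ (it is simplicial, so Proposition~\ref{prp:simtwin}(i) only guarantees drop at most $1$, and in the paper's own witness family removing the pendant vertex drops the value by exactly $1$), and ``killing two independent footprints'' is not by itself a certificate of drop $2$, since one must also rule out entirely different optimal sequences in $G-u$. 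The paper settles this with an explicit family: $G_{m,n}$ obtained by identifying an end-vertex of $P_m$ ($m\ge 4$) with a vertex of $K_n$ ($n\ge 3$), for which $\ggr(G_{m,n})=m$, and removal of suitable vertices (the pendant end, a clique vertex other than the identified one, an interior path vertex) realizes $m-1$, $m$, and $m-2$ respectively. To complete your proof you would need to supply such a concrete graph together with these verifications.
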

\begin{proof}
The bound $\ggr(G-u) \le \ggr(G)$ immediately follows from the fact that the Grundy domination number of an induced subgraph $H$ of $G$
is not greater than that of $G$. 

For bounding $\ggr(G-u)$ from below, let $S$ be a Grundy dominating sequence in $G$, and let $v$ be the vertex in $S$ that footprints $u$, i.e., $v=f_S(u)$. Consider the sequence $S'$ obtained from $S$ by removing $v$, and, if $u\in \widehat{S}$, also removing $u$. 
We claim that $S'$ is a legal sequence in $G-u$. Indeed, since $v$ and $u$ are not in $S'$, we derive that each vertex $x$ from $S'$ in $G-u$ footprints all the vertices that are footprinted by $x$ with respect to $S$ in $G$ (while $x$ with respect to $S'$ in $G-u$ could also footprint some additional vertices in $N_G(u)\cup N_G[v]$). Since a legal sequence $S'$ can be completed to a dominating sequence of $G-u$, we get $\ggr(G)-2\le |\widehat{S'}| \le \ggr(G-u)$.

To see the second part of the theorem, consider the family of graphs $G_{m,n}$, obtained by identifying a vertex of degree 1 of the path $P_m$, $m\ge 4$, with a vertex of the complete graph $K_n$, $n\ge 3$. It is clear that $\ggr(G_{m,n})=m$. Now, if $u$ is the vertex of degree $1$ in $G_{m,n}$ or its neighbor or the identified vertex, then $\ggr(G_{m,n}-u)=m-1$. If $u$ is a vertex of the complete graph (and not the identified vertex), then $\ggr(G_{m,n}-u)=m$. Finally, if $u$ is any other vertex (in the path), then $\ggr(G_{m,n}-u)=m-2$.
\qed
\end{proof}
 
Better bounds can be obtained for special type of vertices, namely for twin and simplicial vertices, whose definition we recall now. A vertex $v\in V(G)$ is a {\emph{simplicial}} vertex of a graph $G$, if $N(v)$ induces a complete graph. 
Two vertices $u$ and $v$ in $G$ are called {\emph{twins}} if $N[u]=N[v]$. A vertex $v\in V$ is called a {\emph{twin vertex}} if there exists $u\in V$, such that $u$ and $v$ are twins.   
The following result will be applied later (the second statement was known already in~\cite{bgmrr-2014}).

\begin{proposition}\label{prp:simtwin}
Let $G$ be a graph and $u\in V(G)$. 
\begin{enumerate}[(i)]
\item
If $u$ is a simplicial vertex, then $\ggr(G-u)\geq \ggr(G)-1$. 
\item
If $u$ is a twin vertex, then $\ggr(G-u)=\ggr(G)$.
\end{enumerate}
\end{proposition}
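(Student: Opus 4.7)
The plan is to analyze a fixed Grundy dominating sequence $S=(v_1,\ldots,v_k)$ of $G$, with $k=\ggr(G)$, and produce from it a legal sequence in $G-u$ of length $k-1$ (for part (i)) or $k$ (for part (ii)). For every $j$, let $w_j$ denote a chosen witness, i.e.\ some vertex in $N[v_j]\setminus\bigcup_{\ell<j}N[v_\ell]$. All verifications of legality in $G-u$ will amount to checking that the chosen witnesses still work; this is routine and follows from the observation that removing $u$ can only shrink closed neighborhoods, so witnesses that avoid $u$ transfer automatically.

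For part (i), I set $v=f_S(u)$ and split on whether $v=u$. If $v=u$, then $u\in\widehat S$, say $u=v_i$, and $u$ was not covered before step $i$. I take $S'$ to be $S$ with $u$ removed. For $j<i$ the witness $w_j$ is not $u$ (else $u$ would have been covered earlier), and for $j>i$ the witness $w_j$ lies outside $N[v_i]=N[u]$, so in particular $w_j\neq u$ and $w_j$ is non-adjacent to $u$; in both situations $w_j$ remains a valid witness in $G-u$. If $v\neq u$ then $v\in N(u)$, and here the simplicial hypothesis is used decisively: since $N(u)$ is a clique, every vertex of $N[u]$ is adjacent to (or equal to) $v$, hence $N[u]\subseteq N[v]$. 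This inclusion rules out $u\in\widehat S$, because then $u$ would enter $S$ strictly after $v$, yet $N[u]$ would already be covered by $v$, contradicting legality of the occurrence of $u$. So $u\notin\widehat S$, and I take $S'$ to be $S$ with $v$ removed; the verification of legality of the remaining terms in $G-u$ follows the same pattern as before, using the inclusion $N[u]\subseteq N[v]$ to conclude $w_j\notin N[u]$ for all $j>$ the position of $v$.

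For part (ii), let $u'$ be a twin of $u$, so $N[u]=N[u']$. First I observe that $\widehat S$ cannot contain both $u$ and $u'$, since the later of the two would have $N[u]$ already covered by the earlier one and hence fail to be a legal choice. If $u\notin\widehat S$, I keep $S$ as-is; whenever $w_j=u$, I replace the witness by $u'$, which is legitimate because $N[u]=N[u']$ gives $u'\in N[v_j]$ and $u'\notin\bigcup_{\ell<j}N[v_\ell]$. If $u\in\widehat S$ then $u'\notin\widehat S$, and I form $S'$ by swapping $u$ for $u'$ in $S$; an identical witness-substitution (using the $u'$-witness at position $i$ in case $w_i=u$) confirms $S'$ is legal in $G-u$ and has length $k$, which combined with $\ggr(G-u)\le\ggr(G)$ yields equality.

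The only genuinely substantive step is the exclusion of the configuration $\bigl(u\in\widehat S,\; f_S(u)\neq u\bigr)$ in part (i); this is exactly the configuration that forces the weaker $\ggr(G)-2$ bound in Theorem~\ref{thm:vertex-removed}, and eliminating it via $N[u]\subseteq N[v]$ is what promotes the lower bound to $\ggr(G)-1$ in the simplicial case. Everything else is book-keeping on witnesses.
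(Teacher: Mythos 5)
Your proof is correct and follows essentially the same route as the paper: in (i) you delete $f_S(u)$ from a Grundy sequence after establishing the key fact that a simplicial $u$ in $\widehat S$ must footprint itself (you phrase it contrapositively, excluding $u\in\widehat S$ when $f_S(u)\neq u$ via $N[u]\subseteq N[v]$, which is the same observation), and in (ii) you use the standard twin swap together with the upper bound $\ggr(G-u)\le\ggr(G)$. The only difference is bookkeeping: you verify legality in $G-u$ directly by witness substitution, whereas the paper first produces a Grundy sequence of $G$ avoiding $u$ and then passes to $G-u$; both are fine.
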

\begin{proof}
\textit{(i)} Let $S$ be a Grundy dominating sequence in $G$, $u$ a simplicial vertex, and let $f_S(u)=v$. If $u \notin \widehat{S}$, then the sequence obtained from $S$ by removing $v$ is a legal sequence in $G-u$, implying $\ggr(G-u) \geq \ggr(G)-1.$ Suppose now that $u \in \widehat{S}.$ Then $u$ footprints one vertex from the clique $N[u]$. Thus $u$ is in $S$ before any $x \in N(u) \cap \widehat{S}$ which means that $f_S(u)=u.$ Thus the sequence obtained from $S$ by removing $u$ is a legal sequence in $G-u$, implying $\ggr(G-u) \geq \ggr(G)-1.$  

\textit{(ii)} Let $v$ be a twin of $u$ and let $S$ be a Grundy dominating sequence in $G$. If $u \notin \widehat{S}$, then $S$ is a legal sequence of $G-u$, implying $\ggr(G-u) \geq \ggr(G).$ Suppose now that $u \in \widehat{S}.$ Then $v\notin \widehat{S}$ and the sequence $S'$ obtained from $S$ by replacing $u$ with $v$ is a Grundy dominating sequence in $G$ not containing $u$. Hence also in this case we have $\ggr(G-u) \geq \ggr(G).$ Combining this with Theorem~\ref{thm:vertex-removed} we obtain $\ggr(G-u)=\ggr(G).$  
\qed
\end{proof}

\section{Dominating sequences of Sierpi\'nski graphs}
\label{s:Sierpinski}
Set $[n] = \{1,2,\dots,n \}$ and $[n]_0 = \{0,1,\dots,n-1 \}$.
The Sierpi\'nski graph $S_p^n$ ($n,p \ge 1$) is defined on the vertex set $[p]_0^n$,
two different vertices $u = (u_1,u_2,\dots,u_n)$ and $v = (v_1,v_2,\dots,v_n)$ being adjacent if and
only if there exists an $h \in [n]$ such that
\begin{enumerate}
	\item $u_t = v_t$, for $t = 1,2,\dots,h-1$;
	\item $u_h \neq v_h$; and
	\item $u_t = v_h$ and $u_h = v_t$ for $t = h+1,h+2,\dots,n$;
\end{enumerate}
In the rest, we will shortly write $\langle u_1u_2\dots u_{n}\rangle$ for $(u_1,u_2,\dots,u_n)$ and $u_j$ will be called {\em $j$-th bit} of a vertex $u$.
A vertex of the form $\langle ii\dots i\rangle = \langle i^n \rangle$ of $S_p^n$ is called an {\em extreme vertex}. The extreme vertices of $S_p^n$ are of degree $p-1$ while the degree of any other vertex is $p$.

In other words, $S_p^n$ can be constructed from $p$ copies of $S_p^{n-1}$ as follows. For each $i\in [p]_0$ concatenate $i$ to the left of the vertices in a copy of $S_p^{n-1}$ and denote the obtained graph with $iS_p^{n-1}$. Then for each $i\neq j$ join copies $iS_p^{n-1}$ and $jS_p^{n-1}$ by the single edge $e_{ij}^{(n)} = \{ij^{n-1}, ji^{n-1}\}$.
In Fig.~\ref{fig:S3} the construction of $S_3^{3}$ is illustrated.

\begin{figure}[ht!]
	\begin{center}
		\begin{tikzpicture}[scale=1.0,style=thick,x=1cm,y=1cm]
		\def\vr{2.5pt} 
		\draw (8,9)--(7.4,8);
		\draw (9.2,5)--(10.4,5);
		\draw (11.6,9)--(12.2,8);
		\draw (9.8,12)--(9.2,11)--(10.4,11)--cycle;
		\draw (8.6,10)--(8,9)--(9.2,9)--cycle;
		\draw (11,10)--(10.4,9)--(11.6,9)--cycle;
		\draw (9.2,11)--(8.6,10);
		\draw (9.2,9)--(10.4,9);
		\draw (10.4,11)--(11,10);
		\draw (9.8,12) [fill=white] circle (\vr);
		\draw (9.2,11) [fill=white] circle (\vr);
		\draw (10.4,11) [fill=white] circle (\vr);
		\draw (8.6,10) [fill=white] circle (\vr);
		\draw (8,9) [fill=white] circle (\vr);
		\draw (9.2,9) [fill=white] circle (\vr);
		\draw (11,10) [fill=white] circle (\vr);
		\draw (10.4,9) [fill=white] circle (\vr);
		\draw (11.6,9) [fill=white] circle (\vr);
		\draw[anchor = south] (9.8,12) node {{\bf 0}00};
		\draw[anchor = east] (9.2,11) node {{\bf 0}01};
		\draw[anchor = west] (10.4,11) node {{\bf 0}02};
		\draw[anchor = east] (8.6,10) node {{\bf 0}10};
		\draw[anchor = east] (8,9) node {{\bf 0}11};
		\draw[anchor = north] (9.2,9) node {{\bf 0}12};
		\draw[anchor = west] (11,10) node {{\bf 0}20};
		\draw[anchor = north] (10.4,9) node {{\bf 0}21};
		\draw[anchor = west] (11.6,9) node {{\bf 0}22};
		\draw (7.4,8)--(6.8,7)--(8,7)--cycle;
		\draw (6.2,6)--(5.6,5)--(6.8,5)--cycle;
		\draw (8.6,6)--(8,5)--(9.2,5)--cycle;
		\draw (6.8,7)--(6.2,6);
		\draw (6.8,5)--(8,5);
		\draw (8,7)--(8.6,6);
		\draw (7.4,8) [fill=white] circle (\vr);
		\draw (6.8,7) [fill=white] circle (\vr);
		\draw (8,7) [fill=white] circle (\vr);
		\draw (6.2,6) [fill=white] circle (\vr);
		\draw (5.6,5) [fill=white] circle (\vr);
		\draw (6.8,5) [fill=white] circle (\vr);
		\draw (8.6,6) [fill=white] circle (\vr);
		\draw (8,5) [fill=white] circle (\vr);
		\draw (9.2,5) [fill=white] circle (\vr);
		\draw[anchor = east] (7.4,8) node {100};
		\draw[anchor = east] (6.8,7) node {101};
		\draw[anchor = west] (8,7) node {102};
		\draw[anchor = east] (6.2,6) node {110};
		\draw[anchor = north east] (5.6,5) node {111};
		\draw[anchor = north] (6.8,5) node {112};
		\draw[anchor = west] (8.6,6) node {120};
		\draw[anchor = north] (8,5) node {121};
		\draw[anchor = north] (9.2,5) node {122};
		\draw (12.2,8)--(11.6,7)--(12.8,7)--cycle;
		\draw (11,6)--(10.4,5)--(11.6,5)--cycle;
		\draw (13.4,6)--(12.8,5)--(14,5)--cycle;
		\draw (11.6,7)--(11,6);
		\draw (11.6,5)--(12.8,5);
		\draw (12.8,7)--(13.4,6);
		\draw (12.2,8) [fill=white] circle (\vr);
		\draw (11.6,7) [fill=white] circle (\vr);
		\draw (12.8,7) [fill=white] circle (\vr);
		\draw (11,6) [fill=white] circle (\vr);
		\draw (10.4,5) [fill=white] circle (\vr);
		\draw (11.6,5) [fill=white] circle (\vr);
		\draw (13.4,6) [fill=white] circle (\vr);
		\draw (12.8,5) [fill=white] circle (\vr);
		\draw (14,5) [fill=white] circle (\vr);
		\draw[anchor = west] (12.2,8) node {200};
		\draw[anchor = east] (11.6,7) node {201};
		\draw[anchor = west] (12.8,7) node {202};
		\draw[anchor = east] (11,6) node {210};
		\draw[anchor = north] (10.4,5) node {211};
		\draw[anchor = north] (11.6,5) node {212};
		\draw[anchor = west] (13.4,6) node {220};
		\draw[anchor = north] (12.8,5) node {221};
		\draw[anchor = north west] (14,5) node {222};
		\draw (7.95,8.25) node {$e_{01}^{(3)}$};
		\draw (11.60,8.25) node {$e_{02}^{(3)}$};
		\draw (9.8,5.35) node {$e_{12}^{(3)}$};
		\draw (7.0,8.55)--(12.5,8.55)--(12.5,12.5)--(7.0,12.5)--cycle;
		\draw (12.0,12.2) node {${\bf 0}S_3^2$};
		\end{tikzpicture}
	\end{center}
	\caption{The Sierpi\'nski graph $S_3^{3}$}
	\label{fig:S3}
\end{figure}
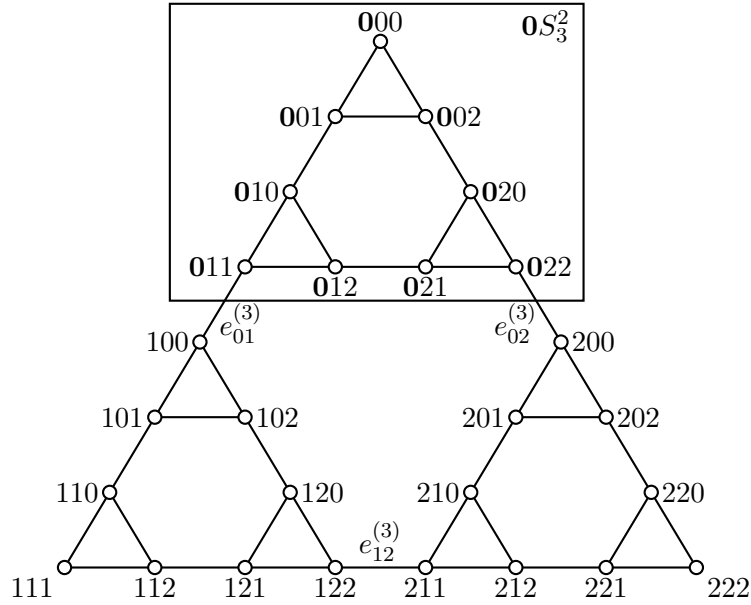

A dominating sequence of $S_p^{n}$ built in the proof of the following result will be denoted by $A_p^{n}$. Note that we can construct a dominating sequence of $iS_p^{n}$ by concatenating $i$ to the left of every vertex in the sequence $A_p^{n}$, and we denote this sequence by $iA_p^{n}$.

\begin{theorem}
	\label{thm:Sier}
	If $n,p\ge 1$ and $S_p^{n}$ is a Sierpi\'nski graph, then
	\begin{equation}
	 \gamma_{gr}(S_p^{n}) = p^{n-1} + \frac{p(p^{n-1}-1)}{2}.
	 \end{equation}
\end{theorem}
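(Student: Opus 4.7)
The plan is to prove the theorem by induction on $n$. The base case $n=1$ gives $S_p^1=K_p$; any first move footprints the entire clique, so $\ggr(K_p)=1=p^0+\frac{p(p^0-1)}{2}$, matching the formula.

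For the upper bound in the inductive step I would invoke Corollary~\ref{cor:k-edges}. The graph $S_p^n$ is obtained from the disjoint union $\bigsqcup_{i\in[p]_0}iS_p^{n-1}$ by adding the $\binom{p}{2}$ bridging edges $e_{ij}^{(n)}$. Since legal sequences in different components of a disconnected graph do not interact, $\ggr$ is additive on disjoint unions, so the disjoint union above has Grundy domination number $p\,\ggr(S_p^{n-1})$. Adding $\binom{p}{2}$ edges can raise $\ggr$ by at most $\binom{p}{2}$ by Corollary~\ref{cor:k-edges}, hence
$$\ggr(S_p^n)\le p\,\ggr(S_p^{n-1})+\binom{p}{2}.$$
Substituting the inductive hypothesis $\ggr(S_p^{n-1})=p^{n-2}+\frac{p(p^{n-2}-1)}{2}$ into the right-hand side and simplifying gives exactly $p^{n-1}+\frac{p(p^{n-1}-1)}{2}$.

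The matching lower bound is the main work: I need to exhibit a legal sequence $A_p^n$ of the required length, exploiting the recursive fractal structure of $S_p^n$. A straight concatenation $0A_p^{n-1}\oplus 1A_p^{n-1}\oplus\cdots\oplus (p-1)A_p^{n-1}$ is legal in $S_p^n$ but attains length only $p\,\ggr(S_p^{n-1})$: each vertex of the form $\langle ij^{n-1}\rangle$ appearing in $iA_p^{n-1}$ prematurely footprints its bridge partner $\langle ji^{n-1}\rangle$ across $e_{ij}^{(n)}$, so the bridge contributes nothing extra. To recover $\binom{p}{2}$ additional entries I would reorder and interleave the copies so that, for every pair $i<j$, one endpoint of the bridge $e_{ij}^{(n)}$ is played while the other is still unfootprinted in its own copy, creating one extra legal move per bridge.

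The main obstacle is carrying out this interleaving legally and simultaneously for all $\binom{p}{2}$ bridges. The cleanest route is likely to strengthen the inductive statement with structural information about $A_p^{n-1}$---for instance, that the extreme vertices $\langle j^{n-1}\rangle$ occupy prescribed positions in $A_p^{n-1}$, or that $A_p^{n-1}$ admits a block decomposition compatible with the bridges of level $n$---so that the interleaving schedule can be described recursively. With such an invariant in place, a direct verification shows that the constructed sequence $A_p^n$ remains legal at every step and has length exactly $p\,\ggr(S_p^{n-1})+\binom{p}{2}=p^{n-1}+\frac{p(p^{n-1}-1)}{2}$, completing the induction.
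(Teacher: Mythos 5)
Your base case and upper bound are exactly the paper's argument: additivity of $\ggr$ over disjoint copies of $S_p^{n-1}$, Corollary~\ref{cor:k-edges} applied to the $\binom{p}{2}$ bridging edges, and solving the recursion $a_n=pa_{n-1}+\binom{p}{2}$. That half is fine.

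The lower bound, however, is where the real work lies, and your proposal stops at the point where the proof has to begin. You correctly identify the mechanism (gain one extra legal move per bridge $e_{ij}^{(n)}$ by playing one endpoint while the other is still unfootprinted) and you correctly sense that this requires a strengthened inductive invariant about the structure of $A_p^{n-1}$, but you never produce the sequence, the invariant, or the legality check; phrases such as ``the cleanest route is likely'' and ``with such an invariant in place'' leave the central construction unproved. The paper resolves exactly this obstacle explicitly: it sets $A_p^{n}=\bigoplus_{i=0}^{p-1}\bigl(iA_p^{n-1}\oplus iB_p^{n}\bigr)$, where $iB_p^{n}=(\langle i(i+1)^{n-1}\rangle,\dots,\langle i(p-1)^{n-1}\rangle)$ consists of the extreme vertices of $iS_p^{n-1}$ whose repeated bit exceeds $i$; these blocks contribute $\sum_{i}(p-1-i)=\binom{p}{2}$ extra moves. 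The structural invariant you were missing is that the recursively built $A_p^{n-1}$ contains only the single extreme vertex $\langle 0^{n-1}\rangle$, so the vertices of $iB_p^{n}$ are new, and each $\langle i j^{n-1}\rangle$ with $j>i$ legally footprints its bridge partner $\langle j i^{n-1}\rangle$, which lies in the not-yet-played copy $jS_p^{n-1}$ and whose only other neighbor across a bridge is $\langle i j^{n-1}\rangle$ itself. One must also re-verify legality of the moves inside later copies after these pre-footprintings (the paper does this by unwinding the recursion: a vertex coming from some $iA_p^{1}$ ends in bit $0$ and still footprints $\langle x_1\dots x_{n-2}i(p-1)\rangle$, while a vertex from some $iB_p^{l}$ of the form $\langle x_1\dots x_{n-l}ab^{l-1}\rangle$ with $a<b$ footprints $\langle x_1\dots x_{n-l}ba^{l-1}\rangle$). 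Without this explicit construction and verification your argument establishes only the upper bound, so as it stands the proposal has a genuine gap, even though the intended strategy points in the same direction as the paper's proof.
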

\begin{proof}
	Fix $p \ge 1$. It is clear that  $\gamma_{gr}(S_p^{1}) = \gamma_{gr}(K_p) = 1$. Let $G$ be a graph of $p$ disjoint copies of $S_p^{n-1}$, where $n \ge 2$. It is easy to see that $ \gamma_{gr}(G) = p \cdot \gamma_{gr}(S_p^{n-1})$. To construct $S_p^{n}$ from $G$, ${p}\choose{2}$ edges have to be added. Hence by Corollary \ref{cor:k-edges}, if ${p}\choose{2}$ edges are added, the Grundy domination number  increases by at most ${p}\choose{2}$. It follows that $\gamma_{gr}(S_p^{n}) \le \gamma_{gr}(G) + {{p}\choose{2}} =  p \cdot \gamma_{gr}(S_p^{n-1}) + {{p}\choose{2}} $.
Note that the recursion $a_n = p a_{n-1} + \binom{p}{2}$ with base $a_1 = 1$ can be converted to the explicit form $a_n = p^{n-1} + \frac{p(p^{n-1}-1)}{2}$.
Thus we derive the upper bound
	\begin{equation}
		\label{prf:Sier1}
		\gamma_{gr}(S_p^{n}) \le p^{n-1} + \frac{p(p^{n-1}-1)}{2}.
	\end{equation}
	
	For the reversed inequality, we will construct a legal dominating sequence in $S_p^{n}$ of length $p \cdot \gamma_{gr}(S_p^{n-1}) + {{p}\choose{2}} $, which will be denoted by $A_p^{n}$. First, we define another sequence $iB_p^{n} = (\langle i(i+1)^{n-1}\rangle,\langle i(i+2)^{n-1}\rangle, \dots ,\langle i(p-2)^{n-1}\rangle, \langle i(p-1)^{n-1}\rangle)$, where $i \in [p]_0$. So in $iB_p^{n}$ are just the extreme vertices of $iS_p^{n-1}$, where $i$ is smaller then all other bits. The sequence $A_p^{n}$ will be constructed recursively. For the recursion base the dominating sequence of $S_p^{1}$ is $A_p^{1} = (\langle 0\rangle)$.
	The sequence $A_p^{n}$, for $n>1$ is constructed as follows:
	\begin{eqnarray*}
		A_p^{n}  = & 0A_p^{n-1}\ \oplus\ 0B_p^{n} \ \oplus\  1A_p^{n-1} \ \oplus\  1B_p^{n} \ \oplus\  \dots \\
		 & \oplus\  (p-2)A_p^{n-1} \ \oplus\  (p-1)B_p^{n} \ \oplus\  (p-1)A_p^{n-1}
	\end{eqnarray*}
	We can also write
	$$	
		A_p^{n} = \bigoplus\limits_{i=0}^{p-1} \bigl( iA_p^{n-1}\ \oplus\ iB_p^{n} \bigr),
	$$
	where $(p-1)B_p^{n}$ is an empty sequence. In Fig.~\ref{fig:S3dom} the  $\ggr$-sequences of $S_3^{1}$, $S_3^{2}$ and $S_3^{3}$ are illustrated.
	
	Clearly $\langle 0^n\rangle $ is the first vertex in $A_p^{n}$. Now, we show that $\langle 0^n\rangle$ is the only extreme vertex in the sequence. To see that, we have to expand the equation to the bottom of recursion. Then each vertex in the sequence comes from some $iA_p^{1}$ or some $iB_p^{l}$, where $i \in [p]_0$ and $1 < l \leq n$. If a vertex comes from $iA_p^{1}$, then its last bit is $0$. The only extreme vertex with the last bit $0$ is $\langle 0^n\rangle $. If a vertex is in some $iB_p^{l}$, then it is not an extreme vertex in $A_p^{n}$, since its $l$-th bit is $i$ and its last $(l-1)$ bits are greater then $i$.
So the only extreme vertex in $A_p^{n}$ is $\langle 0^n\rangle $.
	
	It is easy to see, that vertices are pairwise different. If $i,j \in [p]_0$ and $i \neq j$, then vertices in $iA_p^{n-1}$ and vertices in $jA_p^{n-1}$ differ already in the first bit. The same holds for vertices in $iB_p^{n}$ and $jB_p^{n}$ and vertices in $iA_p^{n-1}$ and $jB_p^{n}$.
	In some $iB_p^{n}$ are just the vertices that are extreme vertices in $S_p^{n-1}$, and in $iA_p^{n-1}$ the only extreme vertex is $\langle  i0^{n-1}\rangle$. But vertex $\langle  i0^{n-1}\rangle$ is not in $iB_p^{n}$ since $0 \leq i$.
	So, vertices that are in $iA_p^{n-1}$ are not in $iB_p^{n}$.
	
	To show that the sequence $A_p^{n}$ is legal, we will check that every vertex in the sequence is footprinting at least one vertex.
	We mentioned already that every vertex in $A_p^{n}$ comes either from $iA_p^{1}$ or some $iB_p^{l}$, where $i \in [p]_0$ and $1 < l \leq n$.
	If the vertex $v$ comes from a sequence $iB_p^{l}$, its form is $v =\langle x_1x_2\dots x_{n-l}ab^{l-1}\rangle$, where $a < b$ and $x_1,\ldots,x_{n-l},a,b \in [p]_0$. The vertex $v$ footprints $u = \langle x_1x_2\dots x_{n-l}ba^{l-1}\rangle$, because $a<b$, and $u$ and all its other neighbors cannot be in $A_p^{n}$ before $v$.
	
	If $v$ comes from some $iA_p^{1}$, its form is $v = \langle x_1x_2\dots x_{n-2}i0\rangle$, and we claim that $v$ footprints at least the vertex $u=\langle x_1x_2\dots x_{n-2}i(p-1)\rangle$. Clearly, $u$ is not footprinted before by some other vertex from $jA_p^{n}$. Vertices that are in some $jB_p^{l}$ and are also in $N[u]$ are just some vertices of $iB_p^{2}$. Since $iB_p^{2}$ is in $A_p^{n}$ after $iA_p^{1}$, it follows that $v$ footprints $u$.
	
	It is obvious that the vertices of $A_p^{n}$ dominate the whole graph, since already the vertices of $\bigoplus\limits_{i=0}^{p-1}  iA_p^{n-1}$ dominates it.
	So $A_p^{n}$ is a (legal) dominating sequence and its length is
	 \begin{eqnarray*}
	 	|A_p^{n}| & = \sum\limits_{i=0}^{p-1} \bigl( |iA_p^{n-1}| + |iB_p^{n}| \bigr) \\
	 	& = \sum\limits_{i=0}^{p-1} \bigl( |A_p^{n-1}| + (p-1-i) \bigr) \\
	 	& = p \cdot |A_p^{n-1}| + \binom{p}{2}.
	 \end{eqnarray*}
	Since $|A_p^{1}|=1$, we can transform the recursion to the explicit form
	$$ |A_p^{n}| = p^{n-1} + \frac{p(p^{n-1}-1)}{2}.$$
	It follows
	\begin{equation}
		\label{prf:Sier2}
		\gamma_{gr}(S_p^{n}) \ge |A_p^{n}| = p^{n-1} + \frac{p(p^{n-1}-1)}{2}.
	\end{equation}
	Combining inequalities \eqref{prf:Sier1} and \eqref{prf:Sier2}, we get
	$$\gamma_{gr}(S_p^{n}) = p^{n-1} + \frac{p(p^{n-1}-1)}{2}.$$

	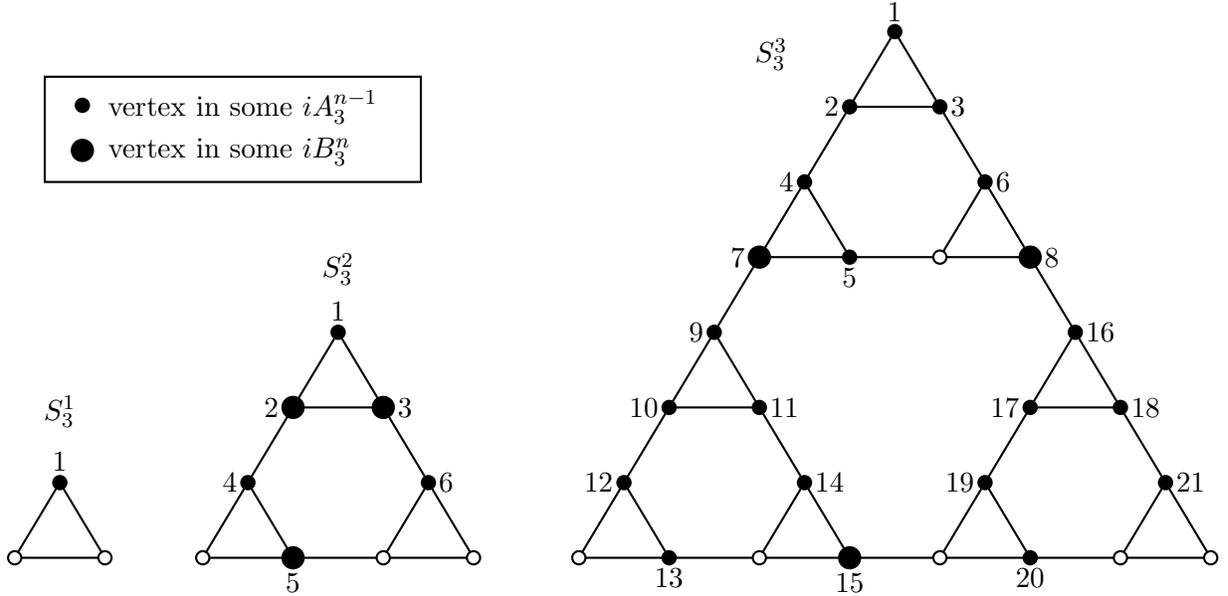
\begin{figure}[ht!]
		\begin{center}
			\begin{tikzpicture}[scale=1.0,style=thick,x=1cm,y=1cm]
			\def\vr{2.5pt} 
			\def\bvr{4.0pt}
			\draw[anchor = north] (-1.3,7.3) node {$S_3^{1}$};
			\draw (-1.3,6)--(-1.9,5)--(-0.7,5)--cycle;
			\draw (-1.3,6) [fill=black] circle (\vr);
			\draw (-1.9,5) [fill=white] circle (\vr);
			\draw (-0.7,5) [fill=white] circle (\vr);
			\draw[anchor = north] (-1.3,6.55) node {1};
			\draw[anchor = north] (2.4,9.2) node {$S_3^{2}$};
			\draw (2.4,8)--(1.8,7)--(3,7)--cycle;
			\draw (1.2,6)--(0.6,5)--(1.8,5)--cycle;
			\draw (3.6,6)--(3,5)--(4.2,5)--cycle;
			\draw (1.8,7)--(1.2,6);
			\draw (1.8,5)--(3,5);
			\draw (3,7)--(3.6,6);
			\draw (2.4,8) [fill=black] circle (\vr);
			\draw (1.8,7) [fill=black] circle (\bvr);
			\draw (3,7) [fill=black] circle (\bvr);
			\draw (1.2,6) [fill=black] circle (\vr);
			\draw (0.6,5) [fill=white] circle (\vr);
			\draw (1.8,5) [fill=black] circle (\bvr);
			\draw (3.6,6) [fill=black] circle (\vr);
			\draw (3,5) [fill=white] circle (\vr);
			\draw (4.2,5) [fill=white] circle (\vr);
			\draw[anchor = north] (2.4,8.55) node {1};
			\draw[anchor = east] (1.75,7) node {2};
			\draw[anchor = west] (3.05,7) node {3};
			\draw[anchor = east] (1.2,6) node {4};
			\draw[anchor = north] (1.8,4.93) node {5};
			\draw[anchor = west] (3.6,6) node {6};
			\draw[anchor = west] (7.8,11.7) node {$S_3^{3}$};
			\draw (8,9)--(7.4,8);
			\draw (9.2,5)--(10.4,5);
			\draw (11.6,9)--(12.2,8);
			\draw (9.8,12)--(9.2,11)--(10.4,11)--cycle;
			\draw (8.6,10)--(8,9)--(9.2,9)--cycle;
			\draw (11,10)--(10.4,9)--(11.6,9)--cycle;
			\draw (9.2,11)--(8.6,10);
			\draw (9.2,9)--(10.4,9);
			\draw (10.4,11)--(11,10);
			\draw (9.8,12) [fill=black] circle (\vr);
			\draw (9.2,11) [fill=black] circle (\vr);
			\draw (10.4,11) [fill=black] circle (\vr);
			\draw (8.6,10) [fill=black] circle (\vr);
			\draw (8,9) [fill=black] circle (\bvr);
			\draw (9.2,9) [fill=black] circle (\vr);
			\draw (11,10) [fill=black] circle (\vr);
			\draw (10.4,9) [fill=white] circle (\vr);
			\draw (11.6,9) [fill=black] circle (\bvr);
			\draw[anchor = south] (9.8,12) node {1};
			\draw[anchor = east] (9.2,11) node {2};
			\draw[anchor = west] (10.4,11) node {3};
			\draw[anchor = east] (8.6,10) node {4};
			\draw[anchor = east] (7.95,9) node {7};
			\draw[anchor = north] (9.2,9) node {5};
			\draw[anchor = west] (11,10) node {6};
			\draw[anchor = west] (11.65,9) node {8};
			\draw (7.4,8)--(6.8,7)--(8,7)--cycle;
			\draw (6.2,6)--(5.6,5)--(6.8,5)--cycle;
			\draw (8.6,6)--(8,5)--(9.2,5)--cycle;
			\draw (6.8,7)--(6.2,6);
			\draw (6.8,5)--(8,5);
			\draw (8,7)--(8.6,6);
			\draw (7.4,8) [fill=black] circle (\vr);
			\draw (6.8,7) [fill=black] circle (\vr);
			\draw (8,7) [fill=black] circle (\vr);
			\draw (6.2,6) [fill=black] circle (\vr);
			\draw (5.6,5) [fill=white] circle (\vr);
			\draw (6.8,5) [fill=black] circle (\vr);
			\draw (8.6,6) [fill=black] circle (\vr);
			\draw (8,5) [fill=white] circle (\vr);
			\draw (9.2,5) [fill=black] circle (\bvr);
			\draw[anchor = east] (7.4,8) node {9};
			\draw[anchor = east] (6.8,7) node {10};
			\draw[anchor = west] (8,7) node {11};
			\draw[anchor = east] (6.2,6) node {12};
			\draw[anchor = north] (6.8,5) node {13};
			\draw[anchor = west] (8.6,6) node {14};
			\draw[anchor = north] (9.2,4.93) node {15};
			\draw (12.2,8)--(11.6,7)--(12.8,7)--cycle;
			\draw (11,6)--(10.4,5)--(11.6,5)--cycle;
			\draw (13.4,6)--(12.8,5)--(14,5)--cycle;
			\draw (11.6,7)--(11,6);
			\draw (11.6,5)--(12.8,5);
			\draw (12.8,7)--(13.4,6);
			\draw (12.2,8) [fill=black] circle (\vr);
			\draw (11.6,7) [fill=black] circle (\vr);
			\draw (12.8,7) [fill=black] circle (\vr);
			\draw (11,6) [fill=black] circle (\vr);
			\draw (10.4,5) [fill=white] circle (\vr);
			\draw (11.6,5) [fill=black] circle (\vr);
			\draw (13.4,6) [fill=black] circle (\vr);
			\draw (12.8,5) [fill=white] circle (\vr);
			\draw (14,5) [fill=white] circle (\vr);
			\draw[anchor = west] (12.2,8) node {16};
			\draw[anchor = east] (11.6,7) node {17};
			\draw[anchor = west] (12.8,7) node {18};
			\draw[anchor = east] (11,6) node {19};
			\draw[anchor = north] (11.6,5) node {20};
			\draw[anchor = west] (13.4,6) node {21};
			%
			\draw (-1.5,11.4)--(-1.5,10)--(3.5,10)--(3.5,11.4)--cycle;
			\draw (-1.0,11.02) [fill=black] circle (\vr);
			\draw[anchor = west] (-0.8,11.0) node {vertex in some $iA_3^{n-1}$};
			\draw (-1.0,10.42) [fill=black] circle (\bvr);
			\draw[anchor = west] (-0.8,10.4) node {vertex in some $iB_3^{n}$};
			\end{tikzpicture}
		\end{center}
		\caption{The Sierpi\'nski graphs $S_3^{1}$, $S_3^{2}$ and $S_3^{3}$ with $\ggr$-sequences.}
		\label{fig:S3dom}
	\end{figure}
	
	\qed
\end{proof}


The proof of Theorem~\ref{thm:Sier} also yields the construction of a Grundy dominating sequence of $S_p^{n}$. In the construction we are building the Grundy dominating sequence by concatenating smaller sequences and by concatenating one bit to the left of every vertex in the sequence. Thus the time complexity of the construction is the same as the length of the Grundy dominating sequence multiplied by $n$, since each vertex is labelled by $n$ bits. We derive that the time complexity of the construction is $O(np^n)$. In addition, the algorithm constructs along the way all Grundy dominating sequences of $S_p^{\ell}$, for $\ell\le n$. Since the complexity of the algorithm (which simply builds the sequence $A_p^{n}$) is the same as generating the labels of vertices in $S_p^{n}$ that form a Grundy dominating sequence, we infer that this complexity is best possible. 

\begin{corollary}
The time complexity of constructing a Grundy dominating sequence ($A_p^{n}$) of the Sierpi\' nski graph $S_p^{n}$ is $O(np^n)$, and this is best possible.
\end{corollary}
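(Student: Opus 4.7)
The plan is to prove the two matching bounds $O(np^n)$ and $\Omega(np^n)$ stated in the corollary.

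For the upper bound I would directly analyse the running time $T(n)$ of the recursive construction of $A_p^n$ from the proof of Theorem~\ref{thm:Sier}. At level $n$ the algorithm performs $p$ recursive calls producing $A_p^{n-1}$, prepends a single symbol to every vertex of each returned subsequence, and explicitly writes down the at most $\binom{p}{2}$ extreme-type vertices appearing in the sequences $iB_p^n$, $i\in [p]_0$. Provided each vertex's length-$n$ storage is allocated once (at its level of creation), prepending a bit costs $O(1)$ per vertex, so the level-$n$ overhead is $O(p\cdot |A_p^{n-1}|)+O(np^2)=O(p^n)$. Hence
\[
T(n)\le p\,T(n-1)+O(p^n),\qquad T(1)=O(1),
\]
which unrolls to $T(n)=O(np^n)$. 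Equivalently and more conceptually, the total output consists of $|A_p^n|=\Theta(p^n)$ vertex labels, each of length $n$, and the algorithm writes every bit of the output a constant number of times.

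For the lower bound (the ``best possible'' part), the argument is a simple output-size bound. By Theorem~\ref{thm:Sier}, any Grundy dominating sequence of $S_p^n$ has length $\gamma_{gr}(S_p^n)=p^{n-1}+\frac{p(p^{n-1}-1)}{2}=\Theta(p^n)$, and every vertex of $S_p^n$ is encoded by an $n$-symbol string over $[p]_0$. Thus any algorithm that actually produces such a sequence must write $\Omega(np^n)$ symbols, so its running time is $\Omega(np^n)$ in any reasonable model of computation in which emitting one output symbol costs at least a constant. Combining the two bounds gives the claim.

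The only mildly delicate point I expect is the book-keeping of the recursive cost: one must ensure that prepending a bit to each vertex of a previously constructed $iA_p^{n-1}$ is not charged the cost of rewriting the entire $(n-1)$-bit label. This is handled cleanly by the allocation convention above, so that each bit of the final output is written exactly once across the entire recursion; with that convention the recurrence applies without over-counting and the two matching bounds yield the corollary.
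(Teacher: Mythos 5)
Your proposal is correct and follows essentially the same reasoning as the paper: the cost of the recursive construction is dominated by writing the $\Theta(p^n)$ vertex labels of $n$ bits each, giving $O(np^n)$, and the matching lower bound is exactly the paper's observation that any algorithm producing the sequence must output these labels, so $\Omega(np^n)$ symbols must be written. Your recurrence $T(n)\le pT(n-1)+O(p^n)$ and the bit-allocation bookkeeping just make explicit the accounting that the paper states informally.
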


There exists an alternative (and easier) construction of a $\gamma_{gr}$-sequence of $S_p^{n}$, of which construction time complexity is slightly worse than above, since we are going through all vertices in $S_p^{n}$. Nevertheless, let us present also this construction, because the vertices in the sequence are nicely listed in the lexicographical order.
This means that we order the vertices lexicographically by their labels and in this order we add to the sequence each legal vertex (a vertex whose neighborhood is not contained in the union of neighborhoods of previously chosen vertices).   
 We denote the sequence by $L_p^{n}$. A vertex $v$ is in $L_p^{n}$ if and only if
\begin{itemize}
		\item the last bit of $v$ is $0$ or
		\item $v = \langle x_1x_2\dots x_{n-l}ab^{l-1}\rangle$, where $2 \le l \le n$, $b > a$ and $x_1,\ldots,x_{n-l},a,b \in [p]_0$.
\end{itemize}
	
Let us show that $L_p^{n}$ is legal. If the last bit of $v$ is $0$ ($v = \langle x_1x_2\dots x_{n-1}0\rangle$) then $v$ footprints at least the vertex $u = \langle x_1x_2\dots x_{n-1}(p-1)\rangle$. Note that $u$ is not footprinted by any other vertex of $L_p^{n}$ since all its other neighbors are lexicographically greater than $v$. So if they are in $L_p^{n}$ then they are in $L_p^{n}$ after $v$.
If $v = \langle x_1x_2\dots x_{n-l}ab^{l-1}\rangle$, where $2 \le l \le n$, $b > a$ and $x_1,\ldots,x_{n-l},a,b \in [p]_0$ then $v$ footprints $u = \langle x_1x_2\dots x_{n-l}ba^{l-1}\rangle$. Since $b > a$, all other neighbors of $u$ are also lexicographically greater then $v$ (note that they are of the form $\langle x_1x_2\dots x_{n-l}ba^{l-2}y\rangle$, where $y \in [p]_0\setminus\{a\}$). It is clear that the vertices of $L_p^{n}$ dominate the whole graph, since already the vertices with the last bit $0$ dominate it. So $L_p^{n}$ is a (legal) dominating sequence.
	
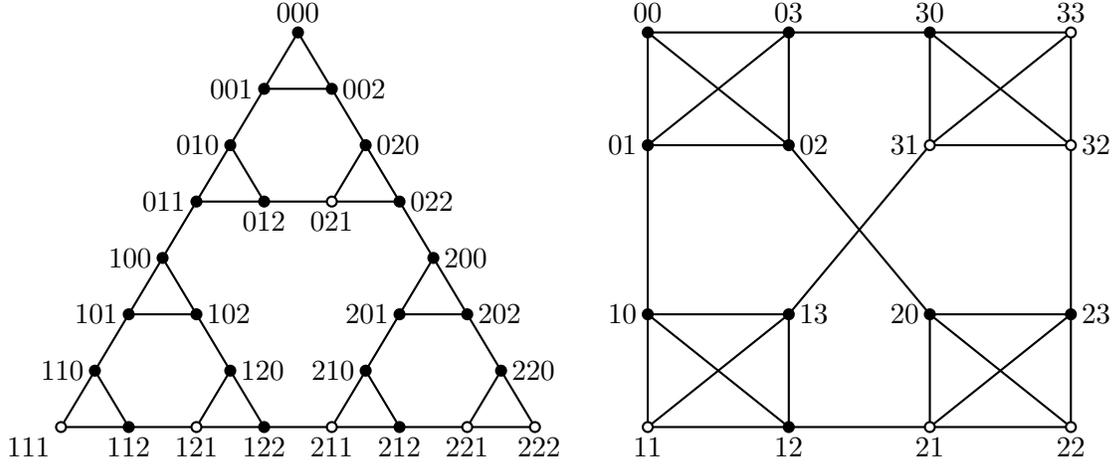
\begin{figure}[ht!]
		
\begin{center}
			\begin{tikzpicture}[scale=0.75,style=thick,x=1cm,y=1cm]
			\def\vr{2.5pt} 
			\draw (8,9)--(7.4,8);
			\draw (9.2,5)--(10.4,5);
			\draw (11.6,9)--(12.2,8);
			\draw (9.8,12)--(9.2,11)--(10.4,11)--cycle;
			\draw (8.6,10)--(8,9)--(9.2,9)--cycle;
			\draw (11,10)--(10.4,9)--(11.6,9)--cycle;
			\draw (9.2,11)--(8.6,10);
			\draw (9.2,9)--(10.4,9);
			\draw (10.4,11)--(11,10);
			\draw (9.8,12) [fill=black] circle (\vr);
			\draw (9.2,11) [fill=black] circle (\vr);
			\draw (10.4,11) [fill=black] circle (\vr);
			\draw (8.6,10) [fill=black] circle (\vr);
			\draw (8,9) [fill=black] circle (\vr);
			\draw (9.2,9) [fill=black] circle (\vr);
			\draw (11,10) [fill=black] circle (\vr);
			\draw (10.4,9) [fill=white] circle (\vr);
			\draw (11.6,9) [fill=black] circle (\vr);
			\draw[anchor = south] (9.8,12) node {000};
			\draw[anchor = east] (9.2,11) node {001};
			\draw[anchor = west] (10.4,11) node {002};
			\draw[anchor = east] (8.6,10) node {010};
			\draw[anchor = east] (8,9) node {011};
			\draw[anchor = north] (9.2,9) node {012};
			\draw[anchor = west] (11,10) node {020};
			\draw[anchor = north] (10.4,9) node {021};
			\draw[anchor = west] (11.6,9) node {022};
			\draw (7.4,8)--(6.8,7)--(8,7)--cycle;
			\draw (6.2,6)--(5.6,5)--(6.8,5)--cycle;
			\draw (8.6,6)--(8,5)--(9.2,5)--cycle;
			\draw (6.8,7)--(6.2,6);
			\draw (6.8,5)--(8,5);
			\draw (8,7)--(8.6,6);
			\draw (7.4,8) [fill=black] circle (\vr);
			\draw (6.8,7) [fill=black] circle (\vr);
			\draw (8,7) [fill=black] circle (\vr);
			\draw (6.2,6) [fill=black] circle (\vr);
			\draw (5.6,5) [fill=white] circle (\vr);
			\draw (6.8,5) [fill=black] circle (\vr);
			\draw (8.6,6) [fill=black] circle (\vr);
			\draw (8,5) [fill=white] circle (\vr);
			\draw (9.2,5) [fill=black] circle (\vr);
			\draw[anchor = east] (7.4,8) node {100};
			\draw[anchor = east] (6.8,7) node {101};
			\draw[anchor = west] (8,7) node {102};
			\draw[anchor = east] (6.2,6) node {110};
			\draw[anchor = north east] (5.6,5) node {111};
			\draw[anchor = north] (6.8,5) node {112};
			\draw[anchor = west] (8.6,6) node {120};
			\draw[anchor = north] (8,5) node {121};
			\draw[anchor = north] (9.2,5) node {122};
			\draw (12.2,8)--(11.6,7)--(12.8,7)--cycle;
			\draw (11,6)--(10.4,5)--(11.6,5)--cycle;
			\draw (13.4,6)--(12.8,5)--(14,5)--cycle;
			\draw (11.6,7)--(11,6);
			\draw (11.6,5)--(12.8,5);
			\draw (12.8,7)--(13.4,6);
			\draw (12.2,8) [fill=black] circle (\vr);
			\draw (11.6,7) [fill=black] circle (\vr);
			\draw (12.8,7) [fill=black] circle (\vr);
			\draw (11,6) [fill=black] circle (\vr);
			\draw (10.4,5) [fill=white] circle (\vr);
			\draw (11.6,5) [fill=black] circle (\vr);
			\draw (13.4,6) [fill=black] circle (\vr);
			\draw (12.8,5) [fill=white] circle (\vr);
			\draw (14,5) [fill=white] circle (\vr);
			\draw[anchor = west] (12.2,8) node {200};
			\draw[anchor = east] (11.6,7) node {201};
			\draw[anchor = west] (12.8,7) node {202};
			\draw[anchor = east] (11,6) node {210};
			\draw[anchor = north] (10.4,5) node {211};
			\draw[anchor = north] (11.6,5) node {212};
			\draw[anchor = west] (13.4,6) node {220};
			\draw[anchor = north] (12.8,5) node {221};
			\draw[anchor = north west] (13.5,5) node {222};
			%
			%
			
			\draw (16,10)--(16,7);
			\draw (23.5,10)--(23.5,7);
			\draw (18.5,10)--(21,7);
			\draw (18.5,7)--(21,10);
			\draw (18.5,12)--(21,12);
			\draw (18.5,5)--(21,5);
			\draw (16,12)--(16,10)--(18.5,10)--(18.5,12)--cycle;
			\draw (16,12)--(18.5,10);
			\draw (16,10)--(18.5,12);
			\draw (16,12) [fill=black] circle (\vr);
			\draw (16,10) [fill=black] circle (\vr);
			\draw (18.5,10) [fill=black] circle (\vr);
			\draw (18.5,12) [fill=black] circle (\vr);
			\draw[anchor = south] (16,12) node {00};
			\draw[anchor = east] (16,10) node {01};
			\draw[anchor = west] (18.5,10) node {02};
			\draw[anchor = south] (18.5,12) node {03};
			\draw (16,7)--(16,5)--(18.5,5)--(18.5,7)--cycle;
			\draw (16,7)--(18.5,5);
			\draw (16,5)--(18.5,7);
			\draw (16,7) [fill=black] circle (\vr);
			\draw (16,5) [fill=white] circle (\vr);
			\draw (18.5,5) [fill=black] circle (\vr);
			\draw (18.5,7) [fill=black] circle (\vr);
			\draw[anchor = east] (16,7) node {10};
			\draw[anchor = north] (16,5) node {11};
			\draw[anchor = north] (18.5,5) node {12};
			\draw[anchor = west] (18.5,7) node {13};
			\draw (21,12)--(21,10)--(23.5,10)--(23.5,12)--cycle;
			\draw (21,12)--(23.5,10);
			\draw (21,10)--(23.5,12);
			\draw (21,12) [fill=black] circle (\vr);
			\draw (21,10) [fill=white] circle (\vr);
			\draw (23.5,10) [fill=white] circle (\vr);
			\draw (23.5,12) [fill=white] circle (\vr);
			\draw[anchor = south] (21,12) node {30};
			\draw[anchor = east] (21,10) node {31};
			\draw[anchor = west] (23.5,10) node {32};
			\draw[anchor = south] (23.5,12) node {33};
			\draw (21,7)--(21,5)--(23.5,5)--(23.5,7)--cycle;
			\draw (21,7)--(23.5,5);
			\draw (21,5)--(23.5,7);
			\draw (21,7) [fill=black] circle (\vr);
			\draw (21,5) [fill=white] circle (\vr);
			\draw (23.5,5) [fill=white] circle (\vr);
			\draw (23.5,7) [fill=black] circle (\vr);
			\draw[anchor = east] (21,7) node {20};
			\draw[anchor = north] (21,5) node {21};
			\draw[anchor = north] (23.5,5) node {22};
			\draw[anchor = west] (23.5,7) node {23};
			\end{tikzpicture}
		\end{center}
		\caption{The Sierpi\'nski graph $S_3^{3}$ and $S_4^{2}$ with vertices of $L_3^{3}$ and $L_4^{2}$ bolded.}
		\label{fig:S3domL}
	\end{figure}
		
To show that $L_p^{n}$ is a $\gamma_{gr}$-sequence, we have to check its length. There are $p^{n-1}$ vertices with the last bit $0$. Now we have to compute how many vertices have the form $\langle x_1x_2\dots x_{n-l}ab^{l-1}\rangle$, where $2 \le l \le n$, $b > a$ and $x_1,\ldots,x_{n-l},a,b \in [p]_0$. If $l$ and all $x_1,\ldots,x_{n-l}$ are fixed, then there are $\frac{p(p-1)}{2}$ such vertices. So altogether we have
\begin{equation}
		\sum\limits_{l=2}^{n} \big( p^{n-l} \cdot \frac{p(p-1)}{2}\big) = \frac{p(p-1)}{2} \sum\limits_{l=2}^{n}  p^{n-l}= \frac{p(p-1)}{2} \cdot \frac{p^{n-1}-1}{p-1} = \frac{p(p^{n-1}-1)}{2}
\end{equation}
vertices that satisfy those conditions. This implies that the length of $L_p^{n}$ is $p^{n-1} + \frac{p(p^{n-1}-1)}{2}$, so $L_p^{n}$ is indeed a $\gamma_{gr}$-sequence of $S_p^{n}$. In Fig.~\ref{fig:S3domL},  $S_3^{3}$ and $S_4^{2}$ with $L_3^{3}$ and $L_4^{2}$ are illustrated.

\section{Dominating sequences of interval graphs}
\label{s:interval}
In this section we present an algorithm that generates a Grundy dominating sequence of an arbitrary interval graph. We will use the results of Section~\ref{s:deliting} concerning the deletion of simplicial vertices and twins.

An \emph{interval representation} of a graph is a family of intervals of the real line assigned to vertices so
that vertices are adjacent if and only if the corresponding intervals intersect. A graph is an \emph{interval graph}
if it has an interval representation. For more detailes on interval graphs see \cite{bls-99,mcmc-99}. 

Let us present the ordering and establish the notation of vertices in an interval graph.
Let $G=(V,E)$ be an interval graph with an interval representation $I_G: V(G) \to \{[a,b];~a,b\in {\mathbb{R}},~a \leq b\}$, and vertices $V=\{v_1,\ldots , v_n\}$ sorted in the non-decreasing order according to the right endpoints of corresponding intervals. In other words, $I_G(v_i)=\left[ a_i,b_i \right]$, and $b_1 \leq b_2 \leq \ldots \leq b_n.$ It is clear that $v_1$ is a simplicial vertex of $G$. Let $\widehat{A}=\{a_1,b_1,\ldots a_n,b_n\}$ be the (multi)set of interval endpoints. We will also make use of the non-decreasing sequence $A_{I_G}$ of the real numbers from $\widehat{A}$ of length $2n$, such that all elements of $\widehat{A}$ are used; in the case $a_i = b_j$, for some $i,j \in \{1,2,\ldots,n \}$, $a_i$ is in the sequence before $b_j$. We call the sequence $A_{I_G}$ the \emph{interval endpoints sequence}. In Fig.~\ref{fig:interval1} an example of interval representation and interval endpoints sequence is presented.

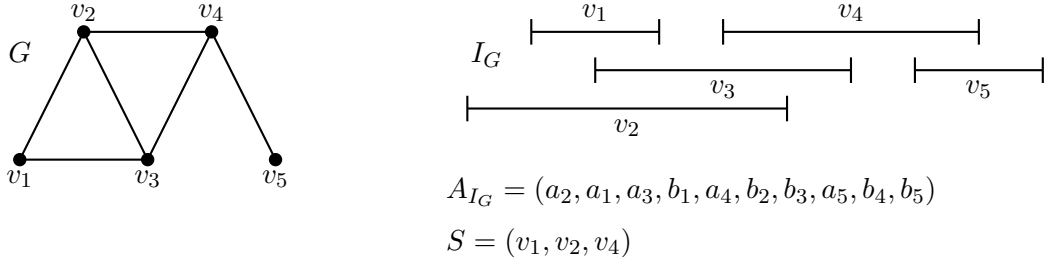
\begin{figure}[ht!]
	
	\begin{center}
		\begin{tikzpicture}[scale=0.85,style=thick,x=1cm,y=1cm]
		\def\vr{2.5pt} 
		\draw (-8,0)--(-7,2);
		\draw (-8,0)--(-6,0);
		\draw (-7,2)--(-6,0);
		\draw (-7,2)--(-5,2);
		\draw (-6,0)--(-5,2);
		\draw (-4,0)--(-5,2);
		\draw (-7,2) [fill=black] circle (\vr);
		\draw (-8,0) [fill=black] circle (\vr);
		\draw (-6,0) [fill=black] circle (\vr);
		\draw (-5,2) [fill=black] circle (\vr);
		\draw (-4,0) [fill=black] circle (\vr);
		\draw[anchor = north] (-8,0) node {$v_1$};
		\draw[anchor = south] (-7,2) node {$v_2$};
		\draw[anchor = north] (-6,0) node {$v_3$};
		\draw[anchor = south] (-5,2) node {$v_4$};
		\draw[anchor = north] (-4,0) node {$v_5$};
		\draw[anchor = north] (-8,2) node {$G$};

		\draw[anchor = north] (-0.7,2) node {$I_G$};
		\draw (0,2)--(2,2);
		\draw[anchor = south] (1,2) node {$v_1$};
		\draw (0,2.2)--(0,1.8);
		\draw (2,2.2)--(2,1.8);
		
		\draw (-1,0.8)--(4,0.8);
		\draw[anchor = north] (1.5,0.8) node {$v_2$};
		\draw (-1,1)--(-1,.6);
		\draw (4,1)--(4,.6);
		
		\draw (1,1.4)--(5,1.4);
		\draw[anchor = north] (3,1.4) node {$v_3$};
		\draw (1,1.6)--(1,1.2);
		\draw (5,1.6)--(5,1.2);
		
		\draw (3,2)--(7,2);
		\draw[anchor = south] (5,2) node {$v_4$};
		\draw (3,2.2)--(3,1.8);
		\draw (7,2.2)--(7,1.8);
		
		\draw (6,1.4)--(8,1.4);
		\draw[anchor = north] (7,1.4) node {$v_5$};
		\draw (6,1.2)--(6,1.6);
		\draw (8,1.2)--(8,1.6);
		
		\draw[anchor = west] (-1.5,-0.5) node {$A_{I_G} = (a_2,a_1,a_3,b_1,a_4,b_2,b_3,a_5,b_4,b_5)$};
		\draw[anchor = west] (-1.5,-1.3) node {$S = (v_1,v_2,v_4)$};
		\end{tikzpicture}
	\end{center}
	\caption{An interval graph $G$ with interval representation $I_G$, interval endpoints sequence $A_{I_G}$ and Grundy dominating sequence $S$ returned by Algorithm~\ref{al:grundy}.}
	\label{fig:interval1}
\end{figure}

\begin{algorithm}[hbt!]
{\caption{Grundy dominating sequence of an interval graph $G$.}
\label{al:grundy}
\KwIn{An interval graph $G$ with vertices $(v_1,v_2,\ldots , v_n)$ (where $v_i$ corresponds to $[a_i,b_i]$), ordered according to their right end-points, and the interval endpoints sequence $A_{I_G}$.}
\KwOut{Grundy dominating sequence $S$ of a graph $G$.}
\BlankLine
{
	    $S=()$;\\
		$newInterval$ = false;\\
		$A=A_{I_G}$; \\
	\While{$A \neq ()$}
	{
		Choose $e\in A$ such that $A=(e)\oplus A'$;\\

		\If{$e$ is some $a_i$}{
			$newInterval$ = true;\\
		}
		\ElseIf{$e$ is some $b_i$ and $newInterval$ is true}{
			$S=S\oplus (v_i)$;\\
			$newInterval$ = false;\\
		}
		$A=A'$
	}
}
}
\end{algorithm}

\begin{theorem}
Algorithm~\ref{al:grundy} returns a Grundy dominating sequence of an interval graph $G$. In addition, $\ggr(G)$ equals the number of consecutive subsequences of the form $(a_i,b_j)$ in the interval endpoints sequence $A_{I_G}$ for any interval representation $I_G$ of $G$.
\end{theorem}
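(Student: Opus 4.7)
The plan is to first verify that the algorithm's output $S$ is a legal dominating sequence whose length equals the number of consecutive $(a_i,b_j)$-subsequences of $A_{I_G}$, and then to show by induction on $n=|V(G)|$ that this length is exactly $\ggr(G)$; together these give both statements of the theorem. The first observation is that a vertex is appended to $S$ precisely when a $b$-endpoint is processed with the flag \textit{newInterval} set to true, and a short case-check shows that this happens if and only if the immediately preceding entry of $A_{I_G}$ is an $a$-endpoint; hence $|S|$ equals the number of consecutive $(a,b)$-pairs in $A_{I_G}$.

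For legality and domination, write $S=(v_{i_1},\ldots,v_{i_k})$ and let $a_{j_t}$ denote the left endpoint immediately preceding $b_{i_t}$ in $A_{I_G}$. I would argue that $v_{j_t}$ (possibly equal to $v_{i_t}$) serves as a private neighbor of $v_{i_t}$ with respect to $\{v_{i_1},\ldots,v_{i_t}\}$: the intervals of $v_{j_t}$ and $v_{i_t}$ overlap because $a_{j_t}$ and $b_{i_t}$ are consecutive in $A_{I_G}$, while for every $s<t$ the endpoint $b_{i_s}$ lies strictly before $a_{j_t}$ in $A_{I_G}$, which by the tie-breaking rule forces $b_{i_s}<a_{j_t}$ as real numbers and hence $I_G(v_{i_s})\cap I_G(v_{j_t})=\emptyset$. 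Domination is analogous: for any $v_k$, either $v_k$ itself is added when $b_k$ is processed, or some $v_{i_t}$ is inserted between the positions of $a_k$ and $b_k$, in which case $b_{i_t}\in I_G(v_k)\cap I_G(v_{i_t})$.

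Writing $f(G)=|S|$, it remains to prove $\ggr(G)=f(G)$ by induction on $n$; the base $n=1$ is trivial. For the step, I would analyse the simplicial vertex $v_1$ via the lengths of the first $a$-run (the $a$-endpoints preceding $b_1$) and the first $b$-run of $A_{I_G}$. If both runs have length $1$, then $N[v_1]=\{v_1\}$, so $\ggr(G)=\ggr(G-v_1)+1$, and removing $(a_1,b_1)$ eliminates exactly one $(a,b)$-transition, yielding $f(G)=f(G-v_1)+1$. If only the first $a$-run has length $>1$, the same count gives $f(G)=f(G-v_1)+1$, and Proposition~\ref{prp:simtwin}(i) provides $\ggr(G)\leq\ggr(G-v_1)+1=f(G)$, matching the lower bound $\ggr(G)\geq f(G)$ from the algorithm. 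If the first $b$-run has length $>1$, then $b_2$ must lie in this run, which forces $a_2\leq b_1$ (any $a_k$ with $b_1<a_k\leq b_2$ would interrupt the run); a direct check of closed neighborhoods then shows $N[v_1]=N[v_2]$, so Proposition~\ref{prp:simtwin}(ii) gives $\ggr(G)=\ggr(G-v_1)$, while shortening each of the two runs by one preserves the number of $(a,b)$-transitions, so $f(G)=f(G-v_1)$. The remaining configuration (first $a$-run of length $1$, first $b$-run of length $>1$) does not occur, as any additional $b_k$ in the first $b$-run would force $a_k\leq b_1$ and enlarge the first $a$-run. Combining each case with the inductive hypothesis $\ggr(G-v_1)=f(G-v_1)$ closes the induction.

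The main subtlety I expect lies in the third case: recognising that the combinatorial condition (first $b$-run of length $>1$) forces $v_1$ to have a twin is what converts the one-sided bound of Proposition~\ref{prp:simtwin}(i) into the equality needed to make the induction go through.
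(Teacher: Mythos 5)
Your proposal is correct, and while it shares the paper's overall skeleton (induction on $n$ via deletion of the simplicial vertex $v_1$ with smallest right endpoint, using Proposition~\ref{prp:simtwin}), it differs in how the two key ingredients are obtained. First, you prove directly, by a private-neighbor argument on the endpoint sequence, that the output of Algorithm~\ref{al:grundy} is a legal dominating sequence whose length is the number of $(a,b)$-transitions; the paper instead obtains legality inductively, by comparing the algorithm's output on $G$ with a modification of the sequence $S'$ for $G-v_1$ (replacing $v_2$ by $v_1$ in the twin case, prepending $v_1$ otherwise). This buys you two things: the lower bound $\ggr(G)\ge f(G)$ is available unconditionally, so in the non-twin-type case you only need the upper bound from Proposition~\ref{prp:simtwin}(i), and you never need the paper's claim that $N[v_1]\subsetneq N[v_2]$, which as stated requires $v_1v_2\in E(G)$ and hence needs extra care when $v_1$ is not adjacent to $v_2$ (e.g.\ when $v_1$ is isolated); your route sidesteps this entirely. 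Second, your case analysis is driven by the run structure of $A_{I_G}$ rather than by the twin/non-twin dichotomy, and you actually verify the one nontrivial implication (a first $b$-run of length at least two forces $a_2\le b_1$ and hence $N[v_1]=N[v_2]$), whereas the paper asserts without argument that a twin of $v_1$ may be taken to be $v_2$. Finally, because you track the transition count $f$ explicitly through the induction (checking in each case how deleting $a_1,b_1$ changes the count), the second statement of the theorem is an integral outcome of the induction rather than the paper's closing remark; the price is a slightly longer bookkeeping argument, but all of your counting claims check out.
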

\begin{proof}
The proof goes by induction on the number of vertices of an interval graph $G)$. For $|V|=1$ the length of the sequence of our algorithm is 1 which is clearly optimal. 

Suppose now that the algorithm returns a Grundy dominating sequence for any interval graph with at most $n-1$ vertices and let $G$ be an arbitrary interval graph with $n$ vertices. Let $V(G)=\{v_1,\ldots , v_n\}$ and let $I_G: V(G) \to \{\left[a,b\right];~a,b \in {\mathbb{R}}, ~a\leq b\}$ be an interval representation of $G$ with $I_G(v_i)=\left[a_i,b_i\right]$ ordered according to their right end-points, i.e., $b_1 \leq b_2 \leq \ldots \leq b_n$. Let $A_{I_G}$ be the corresponding interval endpoints sequence. Since $G-v_1$ is an interval graph with $n-1$ vertices, the algorithm returns the Grundy dominating sequence $S'$ in $G-v_1$ (using the induction hypothesis). Since the vertex with the smallest right end-point is the first vertex in the sequence produced by the algorithm, $v_2$ is the first vertex of $S'.$

There are two options for $v_1$. First, suppose that $v_1$ is a twin vertex in $G$. Then $v_1$ and $v_2$ are twins, and, using Proposition~\ref{prp:simtwin}(ii), $\ggr(G)=\ggr(G-v_1)$. Hence the sequence $S'$ is also a Grundy dominating sequence in $G$. As $v_1$ and $v_2$ are twins, the sequence $S$ obtained from $S'$ by replacing $v_2$ with $v_1$ is also a Grundy dominating sequence in $G$. Note that $S$ is exactly the sequence returned by Algorithm~\ref{al:grundy}. Indeed, since $v_1$ and $v_2$ are two
consecutive vertices with respect to the right endpoints ordering and are twins, no interval endpoint lies between $b_1$ and $b_2$; i.e., $b_1$ and $b_2$ are consecutive endpoints in the sequence $A_{I_G}$. Hence, after $v_1$ is put to $S$ (and $v_2$ is not), the algorithm follows the same steps as the algorithm in $G-v_1$. The proof of this case is complete.

Finally suppose that $v_1$ is not a twin in $G$. Since $v_1$ is simplicial in $G$, by Proposition~\ref{prp:simtwin}(i) we infer $\ggr(G) \leq \ggr(G-v_1)+1.$ As $v_2$ is the first vertex of $S'$ and $N[v_1] \subsetneq  N[v_2]$, $S=(v_1)\oplus S'$ is a legal dominating sequence in $G$. Proposition~\ref{prp:simtwin}(i) again implies that $\ggr(G)=\ggr(G-v_1)+1$, which means that $S$ is a Grundy dominating sequence in $G$. Since $S$ is the sequence returned by Algorithm~\ref{al:grundy}, the proof of the correctness of the algorithm is complete.       

For the second statement in the theorem, one just needs to note that the algorithm counts the number of consecutive subsequences of $A_{I_G}$, in which the first vertex is a left endpoint $a_i$ and the second vertex is a right endpoint $b_j$. Since the Grundy domination number of $G$ is independent from the choice of its interval representation $I_G$ we infer that the number of such subsequnces is also invariant under the interval representation.
\qed
\end{proof}

Let $G$ be an arbitrary interval graph on $n$ vertices and $m$ edges. It is easy to see that the time complexity of Algorithm \ref{al:grundy} is $O(n)$, since the length of the interval endpoints sequence is $2n$. 
It is known that the time complexity of constructing interval representation of an interval graph is $O(n+m)$ \cite{bl-1976,hm-1999,cos-2009}. To sort vertices according to their right endpoints and to construct interval endpoints sequence $O(n\log n)$ time is needed, since we just need to sort endpoints of intervals.
Thus the time complexity of preparing input data for Algorithm~\ref{al:grundy} is $ O(n\log n + m)$. We derive the following result. 

\begin{corollary}
	Let $G$ be an interval graph on $n$ vertices and $m$ edges. The time complexity of preprocessing input data for Algorithm~\ref{al:grundy} is $O(n\log(n) + m)$, and Algorithm~\ref{al:grundy} is linear with complexity $O(n)$.
\end{corollary}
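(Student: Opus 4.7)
The plan is to separate the claim into its two parts—preprocessing cost and running time of Algorithm~\ref{al:grundy} itself—and justify each by inspecting what the algorithm actually consumes, then invoking standard results about interval graphs for the pieces we do not build ourselves.

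For the preprocessing bound, I would first note that the algorithm consumes three ingredients: an interval representation $I_G$ of $G$, a labeling $v_1,\dots,v_n$ of the vertices according to non-decreasing right endpoints, and the interval endpoints sequence $A_{I_G}$. The existence of an interval representation computable in $O(n+m)$ time is precisely the content of the cited PQ-tree algorithm of Booth and Lueker and its successors, so I would appeal directly to that result. Once $I_G$ is in hand, computing the ordering $v_1,\dots,v_n$ reduces to sorting the $n$ right endpoints; likewise, producing $A_{I_G}$ reduces to sorting the $2n$ endpoints (with the tie-breaking rule ``$a_i$ before $b_j$ when $a_i=b_j$'' implemented by a secondary key). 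Each sort costs $O(n\log n)$, so the total preprocessing cost is $O(n+m)+O(n\log n)=O(n\log n + m)$.

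For the running time of Algorithm~\ref{al:grundy}, I would argue that the while-loop executes exactly $|A_{I_G}|=2n$ iterations, since each pass removes the first element of $A$ and $A$ starts with $2n$ entries. I would then verify that each iteration performs only a constant amount of work: a head-extraction on $A$, a type test on $e$ (whether it is a left endpoint $a_i$ or a right endpoint $b_j$), a boolean assignment or append of a single vertex to $S$, and a pointer advance. Summing over the $2n$ iterations yields the claimed $O(n)$ bound.

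The one place a reader might worry is the constant-time type test; I would handle this by noting that $A_{I_G}$ can be stored as a list of records, each carrying its numeric value together with a tag indicating ``left'' or ``right'' and the index $i$ of its interval. This tagging is free during the sorting step that builds $A_{I_G}$, so it incurs no additional asymptotic cost and makes every in-loop test truly $O(1)$. No single step is a real obstacle—the whole argument is a bookkeeping verification once the right data structure is chosen.
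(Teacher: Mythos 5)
Your proposal is correct and follows essentially the same route as the paper: cite the known $O(n+m)$ interval-representation algorithms, sort the $2n$ endpoints in $O(n\log n)$ time, and observe that the main loop performs constant work over $2n$ iterations. Your extra remark about tagging each endpoint with its type and interval index is a harmless refinement of the same bookkeeping argument.
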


\section*{Acknowledgements}
The authors are supported in part by the Slovenian Research Agency (ARRS); B.B. under the grants P1-0297 and  J1-7110, T.G. under the grants L7-5459 and J1-7110, and
T.K. under the grant P1-0297.


\begin{thebibliography}{99}

\bibitem{acfgm-07} L.~Alc\'{o}n, M.~Cerioli, C.~de Figueiredo, M.~Gutierrez, J.~Meidanis, Tree loop graphs, Discrete Appl. Math. 155 (2007) 686--694. 

\bibitem{bl-1976} K.~S.~Booth, G.~S.~Lueker. Testing for the consecutive ones property, interval graphs, and graph planarity using PQ-tree algorithms,
J. Comput. Syst. Sci. 13 (1976) 335--379.

\bibitem{bls-99} A.~Brandst\"{a}dt, V.~B.~Le, J.~P.~Spinrad, {\it Graphs classes: A
survey}, SIAM, Philadelphia, 1999.

\bibitem{brdokl-2014}
  B.~Bre{\v{s}}ar, P.~Dorbec, S.~Klav{\v{z}}ar, G.~Ko\v smrlj,
  Domination game: Effect of edge- and vertex-removal,
  Discrete Math. 330 (2014) 1--10.

\bibitem{brklra-2010}
  B.~Bre{\v{s}}ar, S.~Klav{\v{z}}ar, D.~F.~Rall,
  Domination game and an imagination strategy,
  SIAM J. Discrete Math. 24 (2010) 979--991.

\bibitem{bhr-2016}
  B.~Bre{\v{s}}ar, M.~A.~Henning, D.~F.~Rall,
  Total dominating sequences in graphs, Discrete Math. 339 (2016) 1665–-1676.
  
\bibitem{bgmrr-2014}
  B.~Bre{\v{s}}ar, T.~Gologranc, M.~Milani\v c, D.~F.~Rall, R.~Rizzi,
  Dominating sequences in graphs, Discrete Math. 336 (2014) 22--36.
  
 
\bibitem{cos-2009} D.~G.~Corneil, S.~Olariu, L.~Stewart. The LBFS Structure and Recognition of Interval Graphs, SIAM J. Discrete Math. 23 (2009) 1905--1953.
 
 
\bibitem{cr-2006} F.~Coulon, M.~Raffinot, Fast algorithms for identifying maximal common connected sets of interval graphs, Discrete Appl. Math. 154 (2006) 1709--1721.  
 
\bibitem{fa-1984} M.~Farber, Domination, independent domination, and duality in strongly chordal graphs, Discrete Appl. Math. 7 (1984) 115--130.  


\bibitem{hm-1999} W.~L.~Hsu and T.~H.~Ma, Fast and simple algorithms for recognizing chordal comparability graphs and interval graphs, SIAM J. Comput. 28 (1999) 1004--1020.

\bibitem{GJ}
M.~R.~Garey, D.~S.~Johnson, {\it Computers and Intractability:  A Guide to the Theory of NP-Completeness}, Freeman, New York, 1979.

\bibitem{gkmmp-13} S~Gravier, M.~Kov\v{s}e, M.~Mollard, J.~Moncel, A.~Parreau, New results
on variants of covering codes in Sierpi\'nski graphs, Des. Codes Cryptogr. 69 (2013) 181--188.

\bibitem{hhs-1998}
 T.~W.~Haynes, S.~T.~Hedetniemi, P.~Slater,
 \emph{Fundamentals of Domination in Graphs},
 Marcel Dekker Inc., New York, NY, 1998.
 
\bibitem{hhs2-1998} T.W. Haynes, S.T. Hedetniemi, P.J. Slater (Eds.), {\em Domination in Graphs: Advanced Topics}, Marcel Dekker, Inc, New York, 1998.

\bibitem{hy-2013} M.~A.~Henning, A.~Yeo, {\em Total domination in graphs}, Springer, New York, 2013.

\bibitem{ke-1986} M.~J.~Keil, Total domination in interval graphs,
Inform. Process. Lett. 22 (1986) 171--174.

\bibitem{bill-2012}
  W.~B.~Kinnersley, D.~B.~West, R.~Zamani,
  Extremal problems for game domination number,
  SIAM J. Discrete Math. 27 (2013) 2090--2107.
  
\bibitem{kks-16+} S. Klav\v{z}ar, G. Ko\v{s}mrlj, S.~Schmidt, On graphs with small game domination number, Appl. Anal. Discrete Math. 10 (2016), to appear.

\bibitem{kmp-2003} S.~Klav\v{z}ar, U.~Milutinovi\'c, C.~Petr, 1-perfect codes in Sierpi\'nski graphs, Bull. Aust. Math. Soc. 66 (2002) 369--384.

\bibitem{kmz-16+}
S.~Klav\v{z}ar, A.~M.~Hinz, S.~S.~Zemlji\v{c},  A survey and classification of Sierpi\'{n}ski-type graphs, manuscript.

\bibitem{mcmc-99} T.~A.~McKee, F.~R.~McMorris, {\it Topics in intersection
graph theory}, SIAM, Philadelphia, 1999.


\bibitem{wg-1986} M.~S.~Waterman, J.~R.~Griggs, Interval graphs and maps of DNA,
Bull. Math. Biol. 48 (1986) 189--195. 



\end{thebibliography}
\end{document}